\documentclass[a4paper,11pt]{amsart}

\usepackage{epsfig}
\usepackage{amsthm,amsfonts}
\usepackage{amssymb,graphicx,color}
\usepackage[all]{xy}
\usepackage{verbatim}
\usepackage{hyperref}
\usepackage{enumitem}

\newtheorem{theorem}{Theorem}[section]
\newtheorem*{theorem*}{Theorem}

\newtheorem{corollary}[theorem]{Corollary}
\newtheorem{proposition}[theorem]{Proposition}
\newtheorem{definition}[theorem]{Definition}

\newtheorem{example}[theorem]{Example}
\newtheorem{remark}[theorem]{Remark}

\newcommand{\R}{\mathbb{R}}
\newcommand{\C}{\mathbb{C}}

\newcommand{\N}{\mathbb{N}}

\begin{document}

\title[On Zariski's multiplicity problem at infinity]
{On Zariski's multiplicity problem at infinity}

\author{J. Edson Sampaio}
\address[J. Edson Sampaio]{BCAM - Basque Center for Applied Mathematics,
	      Mazarredo, 14 E48009 Bilbao, Basque Country - Spain.   
	      E-mail: {\tt esampaio@bcamath.org} \newline	      
              and       \newline    
              Departamento de Matem\'atica, Universidade Federal do Cear\'a,
	      Rua Campus do Pici, s/n, Bloco 914, Pici, 60440-900, 
	      Fortaleza-CE, Brazil. \newline  
              E-mail: {\tt edsonsampaio@mat.ufc.br}
}

\keywords{Bi-Lipschitz contact at infinity, Degree, Zariski's Conjecture}
\subjclass[2010]{14B05, 32S50, 58K30 (Primary) 58K20 (Secondary)}
\thanks{The author was partially supported by the ERCEA 615655 NMST Consolidator Grant and also by the Basque Government through the BERC 2014-2017 program and by Spanish Ministry of Economy and Competitiveness MINECO: BCAM Severo Ochoa excellence accreditation SEV-2013-0323.
}
\begin{abstract}
We address a metric version of Zariski's multiplicity conjecture at infinity that says that two complex algebraic affine sets which are bi-Lipschitz homeomorphic at infinity must have the same degree. More specifically, we prove that the degree is a bi-Lipschitz invariant at infinity when the bi-Lipschitz homeomorphism has Lipschitz constants close to 1. In particular, we have that a family of complex algebraic sets bi-Lipschitz equisingular at infinity has constant degree. Moreover, we prove that if two polynomials are weakly rugose equivalent at infinity, then they have the same degree. In particular, we obtain that if two polynomials are rugose equivalent at infinity or bi-Lipschitz contact equivalent at infinity or bi-Lipschitz right-left equivalent at infinity, then they have the same degree.

\end{abstract}

\maketitle

\section{Introduction}

Let $f\colon (\C^n,0)\to (\C,0)$ be the germ of a reduced holomorphic function at the origin and let $(V(f),0)$ be the germ of the zero set of $f$ at origin. In 1971 (see \cite{Zariski:1971}), O. Zariski proposed the following problem:

\begin{enumerate}[leftmargin=0pt]
\item[]{\bf Question A} If $V(f)$ is topologically equivalent to $V(g)$ as germs at the origin $0\in\C^n$, i.e. there exists a homeomorphism $\varphi\colon (\C^n,V(f),0)\to (\C^n,V(g),0)$, then is it true that $m(V(f),0)=m(V(g),0)$?
\end{enumerate}
Although many authors have presented several partial results concerning the question A, it  remains open. In order to know more about Zariski's multiplicity question see, for example, \cite{Eyral:2007}. 

By looking from a metric point of view, and in a more general setting, we have the following metric version of Zariski's multiplicity question (see Chapter 2 in \cite{Chirka:1989}, for a definition of multiplicity of complex analytic sets):
\begin{enumerate}[leftmargin=0pt]
\item[]{\bf Question \~A1($d$)} Let $X\subset \C^n$ and $Y\subset \C^m$ be two complex analytic sets with $\dim X=\dim Y=d$. If their germs at $0\in\C^n$ and $0\in\C^m$, respectively, are bi-Lipschitz homeomorphic, i.e. there exists a bi-Lipschitz homeomorphism $\varphi\colon (X,0)\to (Y,0)$, then is it true that their multiplicities $m(X,0)$ and $m(Y,0)$ are equal?
\end{enumerate}

This question was answered, since when $d\leq 2$, the author jointly with A. Fernandes and J. F. Bobadilla showed that it has a positive answer and when $d\geq 3$ this question has recently been answered negatively by the author in collaboration with L. Birbrair, A. Fernandes and Verbitsky in \cite{BirbrairFSV:2018}. However, let us remark that Question \~A1($d$) was approached in some other works. For instance, G. Comte, in the paper  \cite{Comte:1998}, proved that the multiplicity of complex analytic germs  in $\C^n$ is invariant under bi-Lipschitz homeomorphism with Lipschitz constant close enough to 1. Recently, the author in \cite{Sampaio:2016} (see also \cite{BirbrairFLS:2016}) showed that multiplicity 1 is invariant by bi-Lipschitz homeomorphism and the author jointly with A. Fernandes showed in \cite{FernandesS:2016} that the multiplicity of a complex analytic surface singularity in $\C^3$ is a bi-Lipschitz (embedded) invariant. It was shown also in \cite{FernandesS:2016} that it is enough to address such a question by considering $X$ and $Y$ homogeneous complex algebraic sets. Actually, this result is stated in \cite{FernandesS:2016} for complex analytic hypersurfaces in $\C^n$, however, the proof works for higher codimension complex analytic subsets. Other versions of Question \~A1($d$) were approached by some authors, for example, J.-J. Risler and D. Trotman proved in \cite{RislerT:1997} that if two complex analytic functions are rugose equivalent or bi-Lipschitz right-left equivalent, then they have the same order and G. Comte, P. Milman and D. Trotman showed in \cite{ComteMT:2002} that two complex analytic functions $f,g:(C^n,0)\to (\C,0)$ have the same order, whenever there are positive constants $C$ and $D$ and a homeomorphism $\varphi:(\C^n,0)\to (\C^n,0)$ satisfying 
\begin{enumerate}
\item [{\rm (1)}] $\frac{1}{C}\|z\|\leq \|\varphi(z)\|\leq C\|z\|$, for all $z$ near 0, and
\item [{\rm (2)}] $\frac{1}{D}\|f(z)\|\leq \|g\circ\varphi(z)\|\leq D\|f(z)\|$, for all $z$ near 0.
\end{enumerate}

At this point, we finish this overview on metric versions of the Zariski's multiplicity question and we start to consider the Lipschitz geometry at infinity of complex algebraic sets. 

Let $f\colon \C^n\to \C$ be a reduced polynomial and $X=V(f)$. The degree of the polynomial $f$ is an important integer number associated to $X$; it is called \emph{the degree of $X$}. According to the next example, it is hopeless that degree of $X=V(f)$ comes as a $C^{\infty }$ right invariant. In fact, the degree is not even $C^{\infty }$ right invariant in families. In particular, the degree is not a topological invariant of the embedded subset $X\subset\C^n$.
\begin{example}\label{degree_non}
For each $t\in \C$, let $f_t:\C^2\to \C$ be the polynomial given by $f_t(x,y)=y-tx^2$.  Let $\varphi_t:\C^3\to \C^2$ be the polynomial mapping  given by $\varphi(x,y,t)=(x,y-tx^2)$. Then, $\varphi_t:=\varphi(\cdot,t):\C^2\to \C^2$ is a polynomial automorphism (in particular it is a smooth diffeomorphism) such that $f_t=f_0\circ\varphi_t$, for all $t\in \C$. However, ${\rm deg}(V(f_0))=1$ and ${\rm deg}(V(f_t))=2$, for all $t\not=0$.
\end{example}

In this paper, we deal with the following metric question:

\begin{enumerate}[leftmargin=0pt]
\item[]{\bf Question A1($d$)} Let $X\subset \C^n$ and $Y\subset \C^m$ be two complex algebraic sets with $\dim X=\dim Y=d$. If $X$ and $Y$ are bi-Lipschitz homeomorphic at infinity, in the sense that there exist compact subsets $K_1\subset X$, $K_2\subset Y$ and a bi-Lipschitz homeomorphism $\varphi\colon X\setminus K_1\rightarrow Y\setminus K_2$, then is it true that ${\rm deg}(X)={\rm deg}(Y)$?
\end{enumerate}

The author jointly with A. Fernandes showed in \cite{FernandesS:2017} that degree 1 comes as a bi-Lipschitz invariant at infinity of complex algebraic subsets (see Section \ref{section:preliminaries}, for a definition of degree for higher codimension algebraic sets in $\C^n$). In \cite{BobadillaFS:2017}, the author jointly with J. Fern\'andez de Bobadilla and A. Fernandes showed that the Question  A1($d$) has a positive answer for $d=1$ and $d=2$ and, for each $d\in \N$, A1($d$) and \~A1($d$) are equivalent questions. Since ${\rm deg}(X)=m(X,0)$, when $X$ is a homogeneous complex algebraic set, the sets presented in (\cite{BirbrairFSV:2018}, Theorem 4.3) show that Question A1($d$) has, in general, a negative answer when $d>2$. Thus, the aim of this paper is to present some cases that Question A1($d$) has a positive answer.

Let us describe how this paper is organized. Section \ref{section:preliminaries} is dedicated to present the notions of tangent cones at infinity, degree and relative multiplicities at infinity of complex algebraic subsets in $\C^n$ and, also, bi-Lipschitz homeomorphisms at infinity of such subsets. Section \ref{section:mainresults} is dedicated to proving the main results of the paper, we prove that the degree of a complex algebraic set is invariant under bi-Lipschitz homeomorphism with Lipschitz constant close enough to 1. In particular, in contrast with the example \ref{degree_non}, we obtain that the degree is constant in a family which is bi-Lipschitz equisingular at infinity.
Moreover, we prove that if two polynomials are weakly rugose equivalent at infinity, then they have the same degree. In particular, we obtain that two polynomials have the same degree, if they are rugose equivalent at infinity or bi-Lipschitz contact equivalent at infinity or bi-Lipschitz right-left equivalent at infinity.

\bigskip

\noindent{\bf Acknowledgements}. The author would like to thank Alexandre Fernandes for his interest on this work as well as for his suggestions in the organization of this article. The author would like to thank anonymous referee for corrections and suggestions in writing this article.

\section{Preliminaries}\label{section:preliminaries}
\subsection{Tangent cones}
In this subsection, we set the exact notion of tangent cone that we will use throughout the paper and we list some of its properties.

\begin{definition}
Let $A\subset \R^n$ be an unbounded subset. We say that $v\in \R^n$ is a {\bf tangent vector of} $A$ {\bf at infinity} if there is a sequence of points $\{x_i\}_{i\in \N}\subset A$ such that $\lim\limits_{i\to \infty} \|x_i\|=+\infty $ and there is a sequence of positive numbers $\{t_i\}_{i\in \N}\subset\R^+$ such that 
$$\lim\limits_{i\to \infty} \frac{1}{t_i}x_i= v.$$
Let $C_{\infty }(A)$ denote the set of all tangent vectors of $A$ at infinity. This subset $C_{\infty }(A)\subset\R^n$ is called {\bf the tangent cone of} $A$ {\bf at infinity}.
\end{definition}

\begin{proposition}[Proposition 4.4 in \cite{FernandesS:2017}]\label{curve_infinity}
Let $Z\subset \R^n$ be an unbounded semialgebraic set. A vector $v\in\R^n$ belongs to $C_{\infty }(Z)$ if, and only if, there exists a continuous semialgebraic  curve $\gamma\colon (\varepsilon ,+\infty )\to Z$  such that $\lim\limits _{t\to +\infty }|\gamma(t)|=+\infty $ and $\gamma(t)=tv+o_{\infty }(t),$ where $g(t)=o_{\infty }(t)$ means $\lim\limits _{t\to +\infty }\frac{g(t)}{t}=0$.
\end{proposition}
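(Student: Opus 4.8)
The reverse implication is immediate and I would dispose of it first: given a curve $\gamma$ as in the statement, choose any sequence $t_i\to+\infty$ and set $x_i=\gamma(t_i)\in Z$. Then $\|x_i\|\to+\infty$, while $\frac{1}{t_i}x_i=v+\frac{1}{t_i}\big(\gamma(t_i)-t_iv\big)\to v$ because $\gamma(t)-tv=o_\infty(t)$. Hence $v\in C_\infty(Z)$ with scaling parameters $t_i$. All of the content therefore lies in the forward direction, which I would prove by transferring the asymptotic behaviour of $Z$ at infinity to behaviour of an auxiliary set at a finite point and then invoking the classical semialgebraic curve selection lemma.

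For that reduction I would use the inversion $\iota\colon\R^n\setminus\{0\}\to\R^n\setminus\{0\}$, $\iota(x)=x/\|x\|^2$, a semialgebraic involutive homeomorphism carrying neighbourhoods of infinity to punctured neighbourhoods of the origin. Put $Z'=\iota(Z\setminus\{0\})$; since $Z$ is unbounded, $Z'$ is semialgebraic with $0\in\overline{Z'}$. The key elementary computation is that $\iota$ preserves directions and inverts norms, namely $\iota(x)/\|\iota(x)\|=x/\|x\|$ and $\|\iota(x)\|=1/\|x\|$. Using this I would check that, for $v\neq 0$, one has $v\in C_\infty(Z)$ if and only if $v/\|v\|$ lies in the tangent cone $C_0(Z')$ at the origin: if $x_i\in Z$, $\|x_i\|\to\infty$ and $x_i/t_i\to v$ then $\|x_i\|/t_i\to\|v\|$, whence $x_i/\|x_i\|\to v/\|v\|$, which is exactly the statement that $\iota(x_i)\to 0$ with $\iota(x_i)/\|\iota(x_i)\|\to v/\|v\|$; the converse is read off the same identities. (Both cones contain $0$, so as cones their nonzero parts coincide.)

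With the reduction in place, since $v/\|v\|\in C_0(Z')$ the semialgebraic curve selection lemma yields a continuous semialgebraic curve $\delta\colon[0,\eta)\to\overline{Z'}$ with $\delta(0)=0$, $\delta(s)\in Z'$ for $s>0$, and $\delta(s)/\|\delta(s)\|\to v/\|v\|$; that a \emph{prescribed} direction in the tangent cone is realizable follows by applying the lemma to the semialgebraic set $\{(y,u)\in Z'\times S^{n-1}:u=y/\|y\|\}$ near $(0,v/\|v\|)$ and projecting. After a semialgebraic reparametrisation of the source I may assume $\delta(s)=sv+o(s)$. I then set $\gamma=\iota\circ\delta$: from $\delta(s)=sv+o(s)$ one gets $\|\delta(s)\|^2=s^2\|v\|^2+o(s^2)$, hence $\iota(\delta(s))=\frac{1}{s\|v\|^2}v+o(1/s)$, so substituting $t=1/(s\|v\|^2)$ — a monotone semialgebraic change of variable for small $s$ — produces a continuous semialgebraic curve $\gamma$ defined for large $t$, taking values in $Z$, with $\|\gamma(t)\|\to+\infty$ and $\gamma(t)/t\to v$, which is precisely $\gamma(t)=tv+o_\infty(t)$.

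The degenerate case $v=0$ (which always belongs to $C_\infty(Z)$ for unbounded $Z$) I would handle directly: the curve selection lemma at infinity produces some semialgebraic curve in $Z$ tending to infinity, and reparametrising it by $t=\|\gamma(t)\|^2$ forces $\|\gamma(t)\|/t=1/\sqrt t\to 0$, giving $\gamma(t)/t\to 0$. The main obstacle I anticipate is not conceptual but a matter of careful bookkeeping: verifying that the limiting direction is genuinely preserved by $\iota$ with the correct leading-order error terms, and that the reparametrisation remains continuous and semialgebraic while delivering the exact scaling $\gamma(t)/t\to v$. The Puiseux structure of semialgebraic arcs is exactly what guarantees that $\delta$ has a well-defined tangent direction and a clean monomial leading term, so that these asymptotics hold rather than merely convergence along a subsequence.
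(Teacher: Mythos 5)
Your proof is correct. There is, however, nothing in this paper to compare it against: the statement is quoted with attribution to Proposition 4.4 of \cite{FernandesS:2017}, and no proof is reproduced here, so your argument can only be judged on its own terms. On those terms it is sound and self-contained. The reverse implication is handled exactly as one should; the forward direction via the inversion $\iota(x)=x/\|x\|^2$, which exchanges neighbourhoods of infinity with punctured neighbourhoods of the origin while preserving directions ($\|\iota(x)\|=1/\|x\|$, $\iota(x)/\|\iota(x)\|=x/\|x\|$), is the standard device for transferring such statements between $0$ and $\infty$, and your identification of the nonzero parts of $C_{\infty}(Z)$ and $C_0(Z')$ is verified correctly. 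Two steps that you assert deserve the one extra line they need. First, applying curve selection to the graph set $\{(y,u)\in Z'\times S^{n-1}: u=y/\|y\|\}$ works because $(0,v/\|v\|)$ lies in its closure precisely when $v/\|v\|\in C_0(Z')$; you use this but should state it. Second, the ``semialgebraic reparametrisation'' giving $\delta(s)=sv+o(s)$ should be named: take the new parameter $\sigma=\|\delta(s)\|/\|v\|$, which is continuous, semialgebraic, and monotone near $0$ by the monotonicity theorem for semialgebraic functions (it is positive and tends to $0$, hence increasing), and then $\tilde\delta(\sigma)/\sigma=\|v\|\,\delta(s)/\|\delta(s)\|\to v$. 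With these two points made explicit, the argument, including the degenerate case $v=0$, is complete.
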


Let $X\subset\C^n$ be a complex algebraic subset. Let $\mathcal{I}(X)$ be the ideal of $\C[x_1,\cdots,x_n]$ given by the polynomials which vanish on $X$. For each $f\in\C[x_1,\cdots,x_n]$, let us denote by $f^*$ the homogeneous polynomial composed of the monomials in $f$ of maximum degree.

\begin{proposition}[Theorem 1.1 in \cite{LeP:2016}]\label{algebricity}
Let $X\subset\C^n$ be a complex algebraic subset. Then, $C_{\infty }(X)$ is the affine algebraic set $V(\langle f^*;\, f\in \mathcal{I}(X)\rangle) $.
\end{proposition}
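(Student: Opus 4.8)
The plan is to prove the set equality $C_\infty(X)=W$, where I abbreviate $W:=V(\langle f^*;\, f\in\mathcal{I}(X)\rangle)$, by establishing the two inclusions separately. Throughout I may assume $\dim X\ge 1$ (otherwise $X$ is finite and the statement is trivial), so that $X$ is unbounded; note that $0\in C_\infty(X)$ (take any $x_i\in X$ with $\|x_i\|\to\infty$ and $t_i=\|x_i\|^2$) and $0\in W$ (each $f^*$ with $f\in\mathcal{I}(X)$ nonconstant is homogeneous of positive degree), so it suffices to treat nonzero vectors. The first inclusion $C_\infty(X)\subseteq W$ is elementary: given $v\in C_\infty(X)\setminus\{0\}$ with witnesses $x_i\in X$, $\|x_i\|\to\infty$, $t_i>0$, $x_i/t_i\to v$, I first note $t_i\to+\infty$ (since $x_i/t_i\to v\neq 0$ while $\|x_i\|\to\infty$). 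Fixing $f\in\mathcal{I}(X)$ with homogeneous decomposition $f=\sum_{j=0}^d f_j$, $d=\deg f$, $f^*=f_d$, and using $f(x_i)=0$, I divide by $t_i^d$ to get $0=\sum_{j=0}^d t_i^{\,j-d}f_j(x_i/t_i)$. As $i\to\infty$ each term with $j<d$ carries a factor $t_i^{\,j-d}\to 0$ while $f_j(x_i/t_i)\to f_j(v)$, and the $j=d$ term tends to $f_d(v)$; hence $f^*(v)=0$, and since $f$ was arbitrary, $v\in W$.

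For the reverse inclusion $W\subseteq C_\infty(X)$ I would split the argument into an algebraic step and a geometric step, using the projective closure $\overline{X}\subseteq\mathbb{P}^n$ with coordinates $[z_0:z_1:\cdots:z_n]$, $\C^n=\{z_0\neq 0\}$ and $H_\infty=\{z_0=0\}$. The algebraic step is to show that $v\in W\setminus\{0\}$ forces $[0:v]\in\overline{X}$. For any homogeneous $G\in I(\overline{X})$, set $g(z):=G(1,z)$; then $g\in\mathcal{I}(X)$ because $g$ vanishes on $X=\overline{X}\cap\{z_0\neq 0\}$, and comparing homogenizations gives $G(z_0,z)=z_0^{\,\deg G-\deg g}\,g^h(z_0,z)$, so that $G(0,v)=g^*(v)$ when $\deg G=\deg g$ and $G(0,v)=0$ otherwise. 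Since $v\in W$ yields $g^*(v)=0$, I obtain $G(0,v)=0$; as $I(\overline{X})$ is homogeneous this gives $[0:v]\in V(I(\overline{X}))=\overline{X}$, hence $[0:v]\in\overline{X}\cap H_\infty$.

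The geometric step is to upgrade $[0:v]\in\overline{X}$ to $v\in C_\infty(X)$. Since no irreducible component of $\overline{X}$ lies in $H_\infty$, the affine part $X$ is dense in $\overline{X}$, so $[0:v]$ lies in the Euclidean closure of $X$, and by the complex curve selection lemma there is a nonconstant holomorphic arc $\phi:(D,0)\to(\overline{X},[0:v])$ with $\phi(D\setminus\{0\})\subset X$. Writing $\phi(s)=p(s)\in\C^n$ for $s\neq 0$ and assuming $v_n\neq 0$, in the chart $\{z_n\neq 0\}$ the function $w_0(s)=1/p_n(s)$ is holomorphic with $w_0(0)=0$, so $w_0(s)=c\,s^k(1+o(1))$ with $k\ge 1$, $c\neq 0$; thus $p_n(s)\sim c^{-1}s^{-k}$ and $p(s)=\tfrac{p_n(s)}{v_n}\,v+o(|p_n(s)|)$. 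Restricting to a ray $s=re^{i\psi}$ and letting $r\to 0^+$, I set $t=\|p(s)\|\to+\infty$ and find $p(s)/t\to e^{i\alpha(\psi)}v/\|v\|$ with $\alpha(\psi)=\arg(c^{-1})-k\psi-\arg v_n$, which sweeps the whole circle as $\psi$ varies (because $k\ge 1$). Hence every unit multiple $e^{i\alpha}v/\|v\|$ lies in $C_\infty(X)$, and closing under positive real scaling gives $\C^*v\subset C_\infty(X)$; in particular $v\in C_\infty(X)$, which completes the inclusion and the proof.

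I expect the geometric step to be the main obstacle. It requires producing a holomorphic arc through $[0:v]$ that genuinely enters the affine part $X$ rather than remaining at infinity, together with a careful analysis of $\arg p_n(s)$ to realize the exact direction $v$ and not merely some complex multiple of it. This phase analysis is exactly what reconciles the real-positive scaling built into the definition of $C_\infty(X)$ with the complex-algebraicity of $W$, since it shows as a by-product that $C_\infty(X)$ is $\C^*$-invariant; once this is in place, the two inclusions pin down $C_\infty(X)=W$.
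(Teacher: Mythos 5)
The paper does not prove this proposition at all: it is imported as Theorem 1.1 of \cite{LeP:2016}, so there is no internal proof to compare yours against; your proposal has to be judged as a self-contained argument, and as such it is correct. The easy inclusion $C_{\infty}(X)\subseteq W$ is the standard homogeneous-decomposition computation and is done properly (including the observation that $t_i\to+\infty$ when $v\neq 0$). The algebraic step correctly passes from $v\in W\setminus\{0\}$ to $[0:v]\in\overline{X}$ via the identity $G(z_0,z)=z_0^{\deg G-\deg g}\,g^h(z_0,z)$ for homogeneous $G\in I(\overline{X})$ and $g=G(1,\cdot)\in\mathcal{I}(X)$. The geometric step is where the real content lies, and you handle it correctly: a single holomorphic arc through $[0:v]$ only yields, a priori, that \emph{some} unit complex multiple $e^{i\alpha}v/\|v\|$ is a limit of directions of $X$, and your rotation of the parameter ($s=re^{i\psi}$, shifting the limiting phase by $-k\psi$ with $k\geq 1$, hence sweeping the whole circle) is exactly what reconciles the positive-real scaling in the definition of $C_{\infty}(X)$ with the complex-algebraic nature of $W$. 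A naive version of this step that ignores the phase would be genuinely incomplete; yours is not.

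Two caveats, neither fatal. First, your dismissal of the zero-dimensional case as ``trivial'' is wrong under the paper's definitions: for finite nonempty $X$ one has $C_{\infty}(X)=\emptyset$ (no sequence in $X$ escapes to infinity), while $W=\{0\}$, because every generator $f^*$ with $f\in\mathcal{I}(X)$ nonzero is homogeneous of positive degree. So the proposition genuinely requires $X$ unbounded (equivalently $\dim X\geq 1$ for a nonempty algebraic set); you do assume this, but you should say the hypothesis is needed rather than that the excluded case is trivial. Second, you invoke the complex curve selection lemma as a black box; this is legitimate, but the precise version needed (a holomorphic arc $\phi\colon(D,0)\to(\overline{X},[0:v])$ with $\phi(D\setminus\{0\})\subseteq X$, which exists because no irreducible component of $\overline{X}$ lies in $H_{\infty}$, so $X$ is dense in $\overline{X}$ and $\overline{X}\cap H_{\infty}$ is nowhere dense in every component through $[0:v]$) should be stated explicitly, since it is the one nonelementary input of the whole proof.
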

Among other things, this result above says that tangent cones at infinity of complex algebraic sets in $\C^n$ are complex algebraic subsets as well.

\subsection{Degree and relative multiplicities at infinity}\label{section:lelong} 
This Subsection is closely related to Subsection 1.4 in \cite{BobadillaFS:2017}.

 Let $X\subset \C^n$ be a complex algebraic set with $p=\dim X\geq 1$ and let $X_1,\cdots,X_r$ be the irreducible components of $C_{\infty }(X)$. Below we present a definition of degree which suits better our purposes; for more details about degree see \cite{Chirka:1989}.

Let $\pi\colon\C^n\to \C^p$ be a linear projection such that 
$$\pi^{-1}(0)\cap(C_{\infty }(X))=\{0\}.$$ 
Therefore, $\pi|_{ X}\colon X\rightarrow \C^p$ is a ramified cover with degree equal to $k$. It is well known that the number $k$ does not depend on $\pi$ and, this number is called the {\bf degree of $X$} and denoted by ${\rm deg} (X)$ (see \cite{Chirka:1989}, Corollary 1 on page 126). In particular, $\pi|_{X_j}\colon X_j\rightarrow \C^p$ (resp. $\pi|_{ C_{\infty }(X)}\colon C_{\infty }(X)\rightarrow \C^p$) is a ramified cover with degree equal to ${\rm deg} (X_j)$ (resp. ${\rm deg} (C_{\infty }(X))$), for each $j=1,\cdots,r$. 

\begin{remark}
Let $f\colon \C^n\to \C$ be a reduced polynomial and $X=V(f)$. Then, ${\rm deg}(X)={\rm deg}(f)$.
\end{remark}
Moreover, if the ramification locus of $\pi|_X$ (resp. $\pi|_{ C_{\infty }(X)}$) is not empty, it is a codimension $1$ complex algebraic 
subset $\sigma(X)$ (resp. $\sigma(C_{\infty }(X))$) of $\C^p$. Let us denote 
$\Sigma=\pi|_X^{-1}(\sigma(X))$ and $\Sigma'=\pi|_{C_{\infty }(X)}^{-1}(\sigma(C_{\infty }(X)))$.

Fix $j\in \{1,\cdots,r\}$. For a point $v\in X_j\setminus (C_{\infty }(\Sigma)\cup C_{\infty }(\Sigma'))$, let $\eta, R >0$ such that 
$$C_{\eta,R }(v')\!:=\!\{w\in \C^p|\, \exists t>0; \|tv'-w\|\leq \eta t \}\setminus B_R(0)\subset \C^p\setminus \sigma(X)\cup \sigma(C_{\infty }(X)),$$
where $v'=\pi(v)$. Thus, by using the definition of degree and since $C_{\eta,R }(v')$ (resp. $C_{\eta,R }(v')$) is connected, we have that the number of connected components of $\pi|_X^{-1}(C_{\eta,R }(v'))$ (resp. $\pi|_{ X_j}^{-1}(C_{\eta,R }(v'))$) is equal to ${\rm deg} (X)$ (resp. ${\rm deg} (X_j)$). Moreover, there exist a connected component $V$ of $\pi|_{ X_j}^{-1}(C_{\eta,R }(v'))$ such that $v\in V$ and a compact subset $K\subset \C^n$ such that for each connected component $A_i$ of $\pi|_X^{-1}(C_{\eta,R }(v'))$, we have $C_{\infty }(A_i)\cap (\C^n\setminus K)\subset \pi|_{ C_{\infty }(X)}^{-1}(C_{\eta,R }(v'))$. Then, we denote by $k_X^{\infty }(v)$ the number of connected components $A_i$ such that $C_{\infty }(A_i)\cap (\C^n\setminus K)\subset V$. By definition, we can see that $k_X^{\infty }$ is locally constant and as $X_j\setminus (C_{\infty }(\Sigma)\cup C_{\infty }(\Sigma'))$ is connected, $k_X^{\infty }$ is constant on $X_j\setminus (C_{\infty }(\Sigma)\cup C_{\infty }(\Sigma'))$. Thus, we define $k_X^{\infty }(X_j)=k_X^{\infty }(v)$. In particular, $k_X^{\infty }(w)=k_X^{\infty }(v)$ for all $w\in \pi^{-1}(v')\cap X_j$. Therefore, by using the definition of degree of  $X$ once more, we obtain the following formula
\begin{equation}\label{kurdyka-raby}
{\rm deg}(X)=\sum\limits_{j=0}^r k_X^{\infty }(X_j)\cdot{\rm deg}(X_j).
\end{equation}

The numbers $k_X^{\infty }(X_1),\cdots, k_X^{\infty }(X_r)$ are called {\bf relative multiplicities at infinity of $X$}.

\begin{definition}
Let $X\subset \R^n$ and $Y\subset\R^m$ be two subsets. We say that $X$ and $Y$ are {\bf bi-Lipschitz homeomorphic at infinity}, if there exist compact subsets $K\subset\R^n$ and $\widetilde K\subset \R^m$ and a bi-Lipschitz homeomorphism $\phi \colon X\setminus K\rightarrow Y\setminus \widetilde K$.
\end{definition}

We finish this Section by recalling the invariance of the relative multiplicities at infinity under bi-Lipschitz homeomorphisms at infinity.


\begin{proposition}[Theorem 3.1 in \cite{BobadillaFS:2017}]\label{multiplicities}
Let $X\subset\C^n$ and $Y\subset\C^m$ be complex algebraic subsets, with pure dimension $p=\dim X=\dim Y$, and let $X_1,\dots,X_r$ and $Y_1,\dots,Y_s$ be the irreducible components of the tangent cones at infinity $C_{\infty }(X)$ and $C_{\infty }(Y)$ respectively. If $X$ and $Y$ are bi-Lipschitz homeomorphic at infinity, then $r=s$ and, up to a re-ordering of indices,  $k_X^{\infty }(X_j)=k_Y^{\infty }(Y_j)$, $\forall \ j$.
\end{proposition}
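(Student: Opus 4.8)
The plan is to reduce everything to the local analysis of an induced bi-Lipschitz homeomorphism between the tangent cones at infinity, and then to recognize the relative multiplicity $k_X^{\infty}(X_j)$ as a purely local, connectivity-theoretic quantity that such a map must preserve. Write $\phi\colon X\setminus K\to Y\setminus\widetilde K$ for the given bi-Lipschitz homeomorphism at infinity, with bi-Lipschitz constant $\Lambda$. First I would construct, out of $\phi$, a bi-Lipschitz homeomorphism $D_{\infty}\phi\colon C_{\infty}(X)\to C_{\infty}(Y)$ of the tangent cones at infinity. To this end I would pass to the spherical blow-up at infinity $\beta(z)=(z/\|z\|,\,1/\|z\|)$, which sends $X\setminus B_R(0)$ into $S^{2n-1}\times(0,\delta]$ and whose frontier over $\{0\}$ recovers the link $C_{\infty}(X)\cap S^{2n-1}$ (by Proposition \ref{curve_infinity}, every frontier point is reached by a semialgebraic curve $\gamma(t)=tv+o_{\infty}(t)$ in $X$). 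Given $v\in C_{\infty}(X)$, I would pick such a curve $\gamma$; the bi-Lipschitz inequalities force $\|\phi(\gamma(t))\|=t\|v\|(1+o(1))$ and, after passing to a subsequence, $\phi(\gamma(t))/t$ to converge to a vector $w\in C_{\infty}(Y)$. The standard argument (as in Sampaio's proof that bi-Lipschitz homeomorphic sets have bi-Lipschitz homeomorphic tangent cones) then shows that $w$ is independent of the chosen curve, that $v\mapsto w$ is well defined, bijective, positively homogeneous and $\Lambda$-bi-Lipschitz; this is $D_{\infty}\phi$.

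Next I would extract $r=s$ together with the pairing. Since $C_{\infty}(X)$ and $C_{\infty}(Y)$ are complex algebraic cones of pure dimension $p$ (Proposition \ref{algebricity}), each irreducible component is an irreducible $p$-dimensional cone, and the number of such components is a bi-Lipschitz invariant of the cone; applying this to the homeomorphism $D_{\infty}\phi$ gives $r=s$ and a bijection which, after re-ordering indices, matches $X_j$ with $Y_j$. Concretely I would read the components off the connected components of the regular locus of the cone and use that $D_{\infty}\phi$ carries a generic (regular, single-component) direction of $X_j$ to a generic direction of the corresponding $Y_j$, so the labelling is forced by $D_{\infty}\phi$.

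It then remains to show $k_X^{\infty}(X_j)=k_Y^{\infty}(Y_j)$ under this pairing, and this is the heart of the matter. I would first give a reading of $k_X^{\infty}(X_j)$ that does not refer to the auxiliary projection $\pi$: fixing a generic $v\in X_j$ (regular in $C_{\infty}(X)$, away from $C_{\infty}(\Sigma)\cup C_{\infty}(\Sigma')$), the quantity $k_X^{\infty}(X_j)$ equals the number of connected components of $X\cap N_{\varepsilon,R}(v)$ for a conical neighborhood $N_{\varepsilon,R}(v)=\{z\mid \mathrm{dist}(z,\R^{+}v)\le\varepsilon\|z\|,\ \|z\|\ge R\}$ of the ray through $v$, once $R$ is large and $\varepsilon$ small. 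Indeed, each such component is exactly one of the branches $A_i$ of $X$ at infinity that is asymptotic to the branch $V$ of $X_j$ appearing in the definition, so these components are precisely the local sheets of $X$ accumulating on the single branch of $C_{\infty}(X)$ through $v$; hence the count is well defined and independent of $\pi$. Transporting it through $\phi$, since $\phi$ is $\Lambda$-bi-Lipschitz and $D_{\infty}\phi(v)=w$ is a regular direction of $Y_j$, the image $\phi(X\cap N_{\varepsilon,R}(v))$ is squeezed between two conical neighborhoods of the ray through $w$; as $\phi$ is a homeomorphism it carries connected components to connected components and asymptotic branches to asymptotic branches, so the number of sheets of $X$ accumulating on the branch of $C_{\infty}(X)$ through $v$ equals the number of sheets of $Y$ accumulating on the branch of $C_{\infty}(Y)$ through $w$. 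By the intrinsic description this says exactly $k_X^{\infty}(X_j)=k_Y^{\infty}(Y_j)$.

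The main obstacle is precisely the intrinsic, $\pi$-free characterization of the relative multiplicity and the proof that $\phi$ respects the sheet count. One must show that for large $R$ the number of connected components of $X\cap N_{\varepsilon,R}(v)$ stabilizes and agrees with $k_X^{\infty}(X_j)$, and that $\phi$ sends a conical neighborhood of the ray through $v$ into a conical neighborhood of the ray through $w=D_{\infty}\phi(v)$ without merging or splitting these sheets. Controlling the connectivity of $\phi$ at large scale, and ruling out that distinct asymptotic branches of $X$ get identified or that a single branch breaks up, is the delicate point; everything else is a bookkeeping consequence of the bi-Lipschitz inequalities.
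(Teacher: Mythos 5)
A preliminary remark on the comparison itself: this paper contains no proof of Proposition \ref{multiplicities} --- it is imported, statement and all, as Theorem 3.1 of \cite{BobadillaFS:2017} --- so the only meaningful comparison is with the proof given in that reference. Your overall strategy is the same as the one used there: construct a bi-Lipschitz map between the tangent cones at infinity by rescaling (as in Theorem 4.5 of \cite{FernandesS:2017}, the analogue at infinity of \cite{Sampaio:2016}), pair up the irreducible components, and recognize $k_X^{\infty}(X_j)$ as a count of asymptotic sheets that the homeomorphism must preserve. However, two of your steps are genuinely incomplete. The first is the pairing of components: you propose to read the irreducible components of the cones off the connected components of their regular loci and assert that $D_{\infty}\phi$ carries generic directions to generic directions. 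A homeomorphism --- even a bi-Lipschitz one --- is not known a priori to preserve regular loci; that bi-Lipschitz maps send smooth points of complex algebraic sets to smooth points is itself a nontrivial theorem (\cite{BirbrairFLS:2016}), so as stated this step is circular. What is actually needed, and what this very paper invokes for the identical purpose in the proof of Theorem \ref{lip_trivial}, is Lemma A.8 of \cite{Gau-Lipman:1983}: any homeomorphism between complex analytic sets maps irreducible components onto irreducible components. With that citation the step closes; without it, it is a gap.

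The second and more serious gap is precisely the equality $k_X^{\infty}(X_j)=k_Y^{\infty}(Y_j)$, where your text describes what must be proved rather than proving it. Two assertions are left unestablished: (i) that the projection-free count --- the number of connected components of $X\cap N_{\varepsilon,R}(v)$ --- stabilizes for $\varepsilon$ small and $R$ large and agrees with the number $k_X^{\infty}(X_j)$ defined via the projection $\pi$; and (ii) that $\phi$ neither merges nor splits asymptotic sheets. Point (ii) is not a formal consequence of $\phi$ being a homeomorphism: two distinct components of $X\cap N_{\varepsilon,R}(v)$ do have disjoint images under $\phi$, but nothing you say excludes that both images lie inside the \emph{same} component of $Y\cap N_{\varepsilon',R'}(w)$, since the images need not fill the target sheets. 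The way this is handled in \cite{BobadillaFS:2017} is a sandwich argument: using the uniform convergence on compact sets of the rescalings $\varphi_k(v)=\frac{1}{k}\Phi(kv)$ to $d\varphi$, one produces inclusions $\phi\bigl(X\cap N_{\varepsilon_1,R_1}(v)\bigr)\subset Y\cap N_{\varepsilon_2,R_2}(w)$ and $\phi^{-1}\bigl(Y\cap N_{\varepsilon_2,R_2}(w)\bigr)\subset X\cap N_{\varepsilon_3,R_3}(v)$ whose composite is the natural inclusion, which forces the induced maps on the stabilized sets of sheets to be mutually inverse bijections; the same must be checked against the $\pi$-based definition to get (i). You explicitly flag all of this as ``the delicate point'' and leave it open. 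Since that point is the entire content of the proposition, your proposal is an accurate outline of the known proof, but not a complete proof.
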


\section{Degree as a bi-Lipschitz Invariant at Infinity}\label{section:mainresults}
\subsection{Degree of complex algebraic sets}
The next result is the analogue at infinity of Comte's result mentioned in the introduction.
\begin{theorem}\label{main-theorem}
Let $X\subset \C^n$ and $Y\subset \C^m$ be two complex algebraic sets with $\dim X=\dim Y=d$ and $M=\max \{{\rm deg}(X),{\rm deg}(Y)\}$. If there are compact subsets $K\subset \C^n$ and $\widetilde K\subset \C^m$, constants $C_1,C_2>0$ and a bi-Lipschitz homeomorphism $\varphi:X\setminus K\to Y\setminus \widetilde K$ such that 
$$
\frac{1}{C_1}\|x-y\|\leq \|\varphi(x)-\varphi(y)\|\leq C_2\|x-y\|, \quad \forall x,y \in X\setminus K
$$
and $(C_1C_2)^{2d}\leq 1+\frac{1}{M}$, then ${\rm deg}(X)={\rm deg}(Y).$
\end{theorem}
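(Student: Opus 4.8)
The plan is to extract from the bi-Lipschitz hypothesis a two-sided multiplicative comparison of the degrees, and then to close the argument using integrality. Writing $\lambda:=(C_1C_2)^{2d}$, note first that the inequalities defining $\varphi$ force $C_1C_2\ge 1$, so $\lambda\ge 1$. The target intermediate estimate is
\[
\lambda^{-1}\deg(X)\le \deg(Y)\le \lambda\,\deg(X).
\]
Granting this, suppose for contradiction that $\deg(X)\neq\deg(Y)$. Since both are positive integers with $\max\{\deg(X),\deg(Y)\}=M$, the smaller one is at most $M-1$; and the displayed estimate rearranges (using $\deg(Y)\le\lambda\deg(X)$ and $\deg(X)\le\lambda\deg(Y)$) to $|\deg(X)-\deg(Y)|\le(\lambda-1)\min\{\deg(X),\deg(Y)\}$. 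Invoking $\lambda-1\le \tfrac1M$ would then give $|\deg(X)-\deg(Y)|\le \tfrac{M-1}{M}<1$, contradicting that the two degrees are distinct integers. Hence $\deg(X)=\deg(Y)$.

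To obtain the multiplicative estimate I would read the degree off as a volume density at infinity: for any $d$-dimensional complex algebraic set $Z\subset\C^N$,
\[
\deg(Z)=\lim_{r\to\infty}\frac{\mathcal H^{2d}(Z\cap B_r(0))}{\omega_{2d}\,r^{2d}},
\]
where $\mathcal H^{2d}$ is $2d$-dimensional Hausdorff measure and $\omega_{2d}$ the volume of the unit ball in $\R^{2d}$ (a classical identity, compatible with formula \eqref{kurdyka-raby}; see \cite{Chirka:1989}). Now a $C_2$-Lipschitz map multiplies $\mathcal H^{2d}$ by a factor at most $C_2^{2d}$, and its $C_1$-Lipschitz inverse forces the matching lower bound, so for every measurable $A\subset X\setminus K$ one has $C_1^{-2d}\mathcal H^{2d}(A)\le \mathcal H^{2d}(\varphi(A))\le C_2^{2d}\mathcal H^{2d}(A)$. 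Fixing a basepoint and using the Lipschitz bounds gives $\|\varphi(x)\|\le C_2\|x\|+O(1)$ and $\|\varphi(x)\|\ge C_1^{-1}\|x\|-O(1)$, whence $\varphi\big((X\setminus K)\cap B_r\big)\subset Y\cap B_{C_2r+O(1)}$. Applying the measure-distortion inequality, dividing by $\omega_{2d}r^{2d}$, and letting $r\to\infty$ (the bounded piece $X\cap K$ contributes a fixed constant and so washes out) yields $\deg(Y)\ge \lambda^{-1}\deg(X)$; the symmetric computation with $\varphi^{-1}$ gives $\deg(Y)\le\lambda\,\deg(X)$.

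I expect the main technical point to be the density identity $\deg(Z)=\lim_r \mathcal H^{2d}(Z\cap B_r)/(\omega_{2d}r^{2d})$ together with the careful bookkeeping of the basepoint and of the compact sets $K,\widetilde K$ at infinity; once these are in place the measure-distortion inequalities are routine. Alternatively, and in closer harmony with the framework set up above, one can localize the comparison to the irreducible components of the tangent cones at infinity: the rescalings $\varphi_t(x):=t^{-1}\varphi(tx)$ all share the same bi-Lipschitz constants $C_1,C_2$ and, by Arzel\`a--Ascoli, subconverge to a bi-Lipschitz homeomorphism $C_\infty(X)\to C_\infty(Y)$ with the same constants carrying $X_j$ onto $Y_j$. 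Since the components are cones, degree equals density at the origin, so the measure-distortion argument gives $\lambda^{-1}\deg(X_j)\le\deg(Y_j)\le\lambda\,\deg(X_j)$; feeding this together with the equalities $k_X^\infty(X_j)=k_Y^\infty(Y_j)$ from Proposition \ref{multiplicities} into the degree formula \eqref{kurdyka-raby} reproduces the multiplicative estimate, after which integrality concludes exactly as in the first paragraph.
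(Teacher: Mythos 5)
Your proposal is correct, and your primary argument takes a genuinely different route from the paper's. The paper never touches volumes directly: it extends $\varphi$ to an ambient bi-Lipschitz homeomorphism (via the proof of Lemma 3.1 in \cite{Sampaio:2016}), rescales to get a map $d\varphi$ between the tangent cones at infinity (via the proof of Theorem 4.5 in \cite{FernandesS:2017}), proves by a curve-selection argument (Proposition \ref{curve_infinity}) that $d\varphi$ inherits the sharp constants $C_1,C_2$ on $C_{\infty}(X)$, matches cone components and relative multiplicities via Proposition \ref{multiplicities}, and then applies Comte's local theorem \cite{Comte:1998} to each pair of homogeneous components $X_j,Y_j$ (where degree equals multiplicity at the origin and $M_j\le M$), summing with \eqref{kurdyka-raby}. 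Your main argument short-circuits all of this: degree read as volume density at infinity, the $C_2^{2d}$ and $C_1^{-2d}$ distortion of $\mathcal{H}^{2d}$ under the Lipschitz maps, and the integrality endgame $|\deg(X)-\deg(Y)|\le(\lambda-1)\min\{\deg(X),\deg(Y)\}\le\tfrac{M-1}{M}<1$ (your rearrangement and the check $C_1C_2\ge1$ are both right). This is in effect Comte's own proof transplanted to infinity and applied globally, so it needs no ambient extension, no component bookkeeping, and no Proposition \ref{multiplicities}; it even yields the stronger quantitative conclusion $\lambda^{-1}\deg(X)\le\deg(Y)\le\lambda\deg(X)$ for an arbitrary bi-Lipschitz homeomorphism at infinity. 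The price is that all the weight rests on the identity $\deg(Z)=\lim_{r\to\infty}\mathcal{H}^{2d}(Z\cap B_r)/(\omega_{2d}r^{2d})$; this is indeed classical (it is Stoll's theorem on volume growth of affine algebraic sets, the at-infinity analogue of the Thie--Draper identity underlying Comte's theorem, and is available in \cite{Chirka:1989}), but it should be cited precisely rather than as folklore, since it is exactly the ingredient that replaces the paper's entire cone apparatus. Your alternative route in the last paragraph is, by contrast, essentially the paper's proof with Comte's theorem inlined; the one wrinkle there is that the Arzel\`a--Ascoli subconvergence of the rescalings $\varphi_t$ on the varying domains $t^{-1}(X\setminus K)$, with the \emph{sharp} constants $C_1,C_2$ surviving in the limit, is not automatic --- this is precisely what the paper's ambient extension plus its ``Claim'' (the curve argument) are for.
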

\begin{proof}
Let $X_1,\dots,X_r$ and $Y_1,\dots,Y_s$ be the irreducible components of the tangent cones at infinity $C_{\infty }(X)$ and $C_{\infty }(Y)$ respectively. Considering $X$ and $Y$, respectively, as the sets $X\times\{0\}$ and $\{0\}\times Y$ in $\C^{n+m}=\C^n\times \C^m$, we have by the proof of Lemma 3.1 in \cite{Sampaio:2016}, that there are $C>0$ and a bi-Lipschitz homeomorphism $\Phi:\C^{n+m}\to \C^{n+m}$ such that $\Phi|_{X\setminus K}=\varphi$ and
$$
\frac{1}{C}\|x-y\|\leq \|\Phi(x)-\Phi(y)\|\leq C\|x-y\|, \quad \forall x,y \in X\setminus K.
$$
Thus, by the proof of the Theorem 4.5 in \cite{FernandesS:2017}, there is a bi-Lipschitz homeomorphism $d\varphi:\C^{n+m}\to \C^{n+m}$ such that $d\varphi(0)=0$, $d\varphi(C_{\infty }(X))= C_{\infty }(Y)$ and
$$
\frac{1}{C}\|v-w\|\leq \|d\varphi(v)-d\varphi(w)\|\leq C\|v-w\|, \quad \forall v,w \in C_{\infty }(X).
$$
Moreover, there is a sequence $\{t_j\}\subset \N$ such that $\varphi_{t_j}\rightarrow d\varphi$ uniformly on compact subsets of $\C^{n+m}$, where each mapping $\varphi_k:\C^{n+m}\to \C^{n+m}$ is given by $\varphi_k(v)=\frac{1}{k}\Phi(kv)$ for all $v\in\C^{n+m}$.

\noindent{\bf Claim.} $\frac{1}{C_1}\|v-w\|\leq \|d\varphi(v)-d\varphi(w)\|\leq C_2\|v-w\|, \quad \forall v,w \in C_{\infty }(X).$

Let $v\in C_{\infty }(X)$. By Proposition \ref{curve_infinity}, there is a proper curve $\gamma\colon (\varepsilon ,+\infty )\to X$  such that $\lim\limits _{t\to +\infty }|\gamma(t)|=+\infty $ and $\gamma(t)=tv+o_{\infty }(t)$. Then, we obtain 
$$
\textstyle{\left\|\frac{\Phi(t_jv)}{t_j}-\frac{\Phi(\gamma(t_j))}{t_j}\right\|}=\frac{o_{\infty }(t_j)}{t_j}\to 0 \mbox{ as } j\to +\infty .
$$
Therefore, 
$$\textstyle{\lim\limits _{j\to +\infty}\frac{\Phi(t_jv)}{t_j}=\lim\limits _{j\to +\infty}\frac{\Phi(\gamma(t_j))}{t_j}=d\varphi(v).}$$ 
As $\Phi|_{X\setminus K}=\varphi$, we have 
\begin{equation} \label{eq_lim}
\textstyle{\lim\limits _{j\to +\infty}\frac{\varphi(\gamma(t_j))}{t_j}= d\varphi(v).}
\end{equation} 
Therefore, if $v,w\in C_{\infty }(X)$, there are curves $\gamma, \beta\colon (\varepsilon ,+\infty )\to X$  such that  $\gamma(t)=tv+o_{\infty }(t)$ and $\beta(t)=tw+o_{\infty }(t)$. Thus, by the hypothesis of the theorem, we get
$$
\textstyle{\frac{1}{C_1}\left\|\frac{\gamma(t_j)}{t_j}-\frac{\beta(t_j)}{t_j}\right\|\leq \left\|\frac{\varphi(\gamma(t_j))}{t_j}-\frac{\varphi(\beta(t_j))}{t_j}\right\|\leq C_2\left\|\frac{\gamma(t_j)}{t_j}-\frac{\beta(t_j)}{t_j}\right\|}.
$$
Passing to the limit $j\to +\infty $ and using (\ref{eq_lim}), we obtain
$$
\frac{1}{C_1}\|v-w\|\leq \|d\varphi(v)-d\varphi(w)\|\leq C_2\|v-w\|.
$$

By Proposition \ref{multiplicities}, $r=s$ and, up to a re-ordering of indices, $k_X^{\infty }(X_j)=k_Y^{\infty }(Y_j)$ and $Y_j=d\varphi(X_j)$, $\forall \ j$. Moreover, by eq. (\ref{kurdyka-raby}), we get
$${\rm deg}(X)=\sum\limits_{j=0}^r k_X^{\infty }(X_j)\cdot{\rm deg}(X_j)$$
and 
$${\rm deg}(Y)=\sum\limits_{j=0}^r k_Y^{\infty }(Y_j)\cdot{\rm deg}(Y_j).$$
In particular, for each $j$, $M_j=\max \{{\rm deg}(X_j),{\rm deg}(Y_j)\}\leq M$. Since $X_j$ and $Y_j$ are homogeneous algebraic sets, we have ${\rm deg}(X_j)=m(X_j,0)$ and ${\rm deg}(Y_j)=m(Y_j,0)$. By Theorem 1 in \cite{Comte:1998}, ${\rm deg}(X_j)={\rm deg}(Y_j)$, for all $j$. Therefore,
${\rm deg}(X)={\rm deg}(Y).$
\end{proof}

\noindent{\bf Notation. }Let $A\subset \R^m$, $B\subset \R^k$ and $f:A\to B$ be a Lipschitz function. We define the Lipschitz constant of $f$ by
$$
\textstyle{Lip(f):=\sup\left\{\frac{\|f(x)-f(y)\|}{\|x-y\|};x,y\in A\mbox{ and } x\not=y\right\}.}
$$

\begin{definition}
The family of complex algebraic sets $\{X_t\}_{t\in [0,1]}$ in $\C^n$ is said to be {\bf bi-Lipschitz equi\-sin\-gular at infinity}, if there are a compact subset $K\subset \C^n$ and a mapping $\varphi:(X_0\setminus K)\times[0,1] \to \C^n$ such that 
\begin{itemize}
\item [(i)] for each $t\in[0,1]$, $\varphi((X_0\setminus K)\times \{t\})=X_t\setminus K_t$ for some compact $K_t\subset \C^n$ and $\varphi_t:=\varphi(\cdot,t):X_0\setminus K \to X_t\setminus K_t$ is a bi-Lipschitz homeomorphism with $\varphi_0={\rm id}$ and
\item [(ii)] $\lim\limits _{t\to 0^+}Lip(\varphi_t)=\lim\limits_ {t\to 0^+}Lip(\varphi_t^{-1})=1$.
\end{itemize}
In this case, we say that $\varphi$ is a {\bf bi-Lipschitz deformation of $X_0$ at infinity}.
\end{definition}

\begin{theorem}\label{lip_trivial}
Let $\{X_t\}_{t\in [0,1]}$ be a family of complex algebraic sets. If $\{X_t\}_{t\in [0,1]}$ is bi-Lipschitz equisingular at infinity, then there is $\delta\in (0,1]$ such that ${\rm deg}(X_t)={\rm deg}(X_0)$, for all $t\in [0,\delta]$.
\end{theorem}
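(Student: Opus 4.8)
The plan is to deduce this from Theorem \ref{main-theorem} by letting $t\to 0^+$. For each $t\in(0,1]$ the map $\varphi_t\colon X_0\setminus K\to X_t\setminus K_t$ is a bi-Lipschitz homeomorphism at infinity between the complex algebraic sets $X_0$ and $X_t$, and since bi-Lipschitz maps preserve dimension we have $\dim X_t=\dim X_0=:d$. Writing $C_1(t)=Lip(\varphi_t^{-1})$ and $C_2(t)=Lip(\varphi_t)$, the estimates
$$
\frac{1}{C_1(t)}\|x-y\|\leq\|\varphi_t(x)-\varphi_t(y)\|\leq C_2(t)\|x-y\|,\qquad x,y\in X_0\setminus K,
$$
hold by definition, and condition (ii) gives $C_1(t)C_2(t)\to 1$, hence $\lambda(t):=(C_1(t)C_2(t))^{2d}\to 1$ as $t\to 0^+$. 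If I knew that $M_t:=\max\{{\rm deg}(X_0),{\rm deg}(X_t)\}$ equalled the fixed number ${\rm deg}(X_0)$ for small $t$, I could simply pick $\delta$ with $\lambda(t)\leq 1+\frac{1}{{\rm deg}(X_0)}$ on $[0,\delta]$ and invoke Theorem \ref{main-theorem}.

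The main obstacle is exactly this: the threshold $1+\frac{1}{M_t}$ in Theorem \ref{main-theorem} depends on the unknown quantity ${\rm deg}(X_t)$, so a priori it could shrink to $1$ at the same rate as $\lambda(t)$ and the hypothesis might never be met. To break this circularity, the key step is an a priori one-sided bound: I claim there is $\delta_0>0$ with ${\rm deg}(X_t)\leq{\rm deg}(X_0)$ for $t\in(0,\delta_0]$. To prove it I would repeat the first part of the proof of Theorem \ref{main-theorem}: following \cite{Sampaio:2016} and \cite{FernandesS:2017}, $\varphi_t$ induces a bi-Lipschitz homeomorphism $d\varphi_t\colon C_{\infty}(X_0)\to C_{\infty}(X_t)$ carrying the irreducible components $X_1^0,\dots,X_r^0$ onto the components $X_1^t,\dots,X_r^t$ (after reindexing) and having the same constants $C_1(t),C_2(t)$. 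By Proposition \ref{multiplicities} the relative multiplicities at infinity are unchanged, $k_{X_0}^{\infty}(X_j^0)=k_{X_t}^{\infty}(X_j^t)$ for all $j$.

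Each $X_j^0$ and $X_j^t$ is homogeneous, so ${\rm deg}(X_j)=m(X_j,0)$, and the multiplicity/density comparison underlying Comte's theorem (Theorem 1 in \cite{Comte:1998}) changes the degree of each component by at most a fixed power of $C_1(t)C_2(t)$, giving ${\rm deg}(X_j^t)\leq \lambda(t)\,{\rm deg}(X_j^0)$. Since each ${\rm deg}(X_j^0)$ is a fixed positive integer and $\lambda(t)\to 1$, for $t$ small we get $\lambda(t)\,{\rm deg}(X_j^0)<{\rm deg}(X_j^0)+1$, whence ${\rm deg}(X_j^t)\leq{\rm deg}(X_j^0)$; summing through formula (\ref{kurdyka-raby}) yields ${\rm deg}(X_t)\leq{\rm deg}(X_0)$, as claimed. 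With this bound in hand $M_t={\rm deg}(X_0)$ for $t\in(0,\delta_0]$, and choosing $\delta\in(0,\delta_0]$ so that $\lambda(t)\leq 1+\frac{1}{{\rm deg}(X_0)}$ for all $t\in[0,\delta]$ lets me apply Theorem \ref{main-theorem} to $\varphi_t$ and conclude ${\rm deg}(X_t)={\rm deg}(X_0)$ for every $t\in[0,\delta]$ (the case $t=0$ being trivial). I expect the a priori degree bound, i.e.\ controlling ${\rm deg}(X_t)$ through the fixed ${\rm deg}(X_0)$ by integrality, to be the only delicate point; alternatively, applying the same density comparison to $d\varphi_t^{-1}$ gives the reverse inequality ${\rm deg}(X_j^t)\geq{\rm deg}(X_j^0)$ for small $t$, so one even obtains ${\rm deg}(X_t)={\rm deg}(X_0)$ directly from (\ref{kurdyka-raby}) without the final appeal to Theorem \ref{main-theorem}.
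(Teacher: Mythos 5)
Your proposal is correct and, at its core, follows the same route as the paper: pass to the induced bi-Lipschitz map on tangent cones at infinity, match irreducible components, compare their degrees (equal to multiplicities at $0$, the components being homogeneous) via a Comte-type comparison with constants tending to $1$, and combine this with the invariance of the relative multiplicities at infinity and formula (\ref{kurdyka-raby}). The only differences are cosmetic: the paper cites Comte's Theorem 2 (the family version of his result, which you re-derive from the density comparison plus integrality) and concludes directly from (\ref{kurdyka-raby}) without the detour through Theorem \ref{main-theorem} --- i.e.\ the paper's proof is exactly the ``alternative'' you sketch in your final sentence.
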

\begin{proof}
Let $\varphi:(X_0\setminus K)\times [0,1] \to \C^n$ be a bi-Lipschitz deformation of $X_0$ at infinity. Thus, $\varphi_t:=\varphi(\cdot,t):(X_0\setminus K) \to X_t\setminus K_t$ is a bi-Lipschitz homeomorphism and $\lim\limits _{t\to 0} C_t=\lim\limits _{t\to 0} C_t'=1$, where $C_t$ and $C_t'$ are, respectively, the Lipschitz constants of the mappings $\varphi_t$ and $\varphi_t^{-1}$. As it was done in the proof of the Theorem \ref{main-theorem}, for each $t\in [0,1]$ there is $\psi_t:C_{\infty }(X_0)\to C_{\infty }(X_t)$ such that
$$
\frac{1}{C_t'}\|v-w\|\leq \|\psi_t(v)-\psi_t(w)\|\leq C_t\|v-w\|, \quad \forall v,w\in C_{\infty }(X_0).
$$
Thus, if $Y_{0,1},...,Y_{0,r}$ are the irreducible components of $C_{\infty }(X_0)$, then by Lemma A.8 in \cite{Gau-Lipman:1983}, for each $i=1,...,r$, there is an irreducible component $Y_{t,i}$ of $C_{\infty }(X_t)$ such that $\psi_t(Y_{0,i})=Y_{t,i}$, since $\psi_t$ is, in particular, a homeomorphism. By Theorem 2 in \cite{Comte:1998}, there is $t_i\in (0,1]$ such that ${\rm deg}(Y_{t,i},0)={\rm deg}(Y_{0,i},0)$ for all $t\in [0,t_i]$, since $Y_{0,i}$ and $Y_{t,i}$ are homogeneous complex algebraic sets. Using that the relative multiplicities at infinity are bi-Lipschitz invariant at infinity, we obtain ${\rm deg}(X_t,0)={\rm deg}(X_0,0)$ for all $t\in [0,\delta]$, where $\delta=\min \{t_1,...,t_r\}$.
\end{proof}
\begin{remark}
{\rm The Theorem \ref{lip_trivial} above is still true even if the family $\{\varphi_t\}$ of bi-Lipschitz homeomorphisms does not satisfy $\varphi_0={\rm id}$.}
\end{remark}

\subsection{Degree of polynomials}
\begin{definition}
We say that two polynomials $f,g:\C^n\to \C$ are {\bf bi-Lipschitz contact equivalent at infinity}, if there are compact subsets $K,\widetilde K\subset \C^n$, a constant $C>0$ and a bi-Lipschitz homeomorphism $\varphi:\C^n\setminus K\to \C^n\setminus \widetilde K$  such that 
$$\frac{1}{C}\|f(x)\|\leq \|g\circ\varphi(x)\|\leq C\|f(x)\|, \quad \forall x\in \C^n\setminus K.$$
\end{definition}

\begin{definition}
We say that two polynomials $f,g:\C^n\to \C^m$ are {\bf rugose equivalent at infinity}, if there are compact subsets $K,\widetilde K\subset \C^n$, constants $C_1,C_2>0$ and a bijection $\varphi:\C^n\setminus K\to \C^n\setminus \widetilde K$  such that 
\begin{enumerate}
\item [{\rm (1)}] $\frac{1}{C_1}\|x-y\|\leq \|\varphi(x)-\varphi(y)\|\leq C_1\|x-y\|$, for all $x\in \C^n\setminus K$ and $y\in f^{-1}(0)\setminus K$;
\item [{\rm (2)}] $\frac{1}{C_2}\|f(x)\|\leq \|g\circ\varphi(x)\|\leq C_2\|f(x)\|, \quad \forall x\in \C^n\setminus K.$
\end{enumerate}
\end{definition}


The next result is a consequence of Theorem 3.7 in (\cite{FernandesS:2017b}, Theorem 3.7). However, here we present a direct proof without using the global \L ojasiewicz inequality proved in \cite{JiKS:1992}.
\begin{theorem}
Let $f,g:\C^n\to \C$ be two polynomials. If $f$ and $g$ are bi-Lipschitz contact equivalent at infinity, then ${\rm deg}(f)={\rm deg}(g)$.
\end{theorem}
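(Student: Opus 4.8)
The plan is to reduce the equality of degrees to a comparison of growth rates at infinity along real rays: the contact inequality controls $g\circ\varphi$ from below, while the Lipschitz property of $\varphi$ controls it from above, and matching the two forces the degrees to agree. Write $d={\rm deg}(f)$ and $e={\rm deg}(g)$ (we may assume both are $\geq 1$; if, say, $f$ is a nonzero constant, the contact inequality makes $g$ bounded and bounded away from $0$ near infinity, hence constant, and the degrees are trivially equal). Let $f^*$ and $g^*$ be the leading forms, i.e. the homogeneous parts of top degree $d$ and $e$, which are not identically zero. For a direction $v\in\C^n$ and a real parameter $k>0$ one has $f(kv)=\sum_{j}k^{j}f_j(v)$, where $f_j$ is the degree-$j$ homogeneous part of $f$, so that $f(kv)\sim f^*(v)\,k^{d}$ as $k\to+\infty$ whenever $f^*(v)\neq 0$; such directions $v$ form a dense open set (the complement of the hypersurface $\{f^*=0\}$).

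First I would fix $v$ with $f^*(v)\neq 0$. For $k$ large enough $kv\in\C^n\setminus K$, so the contact hypothesis gives the lower estimate
$$
\|g(\varphi(kv))\|\ \geq\ \frac{1}{C}\|f(kv)\|\ \sim\ \frac{1}{C}\,\|f^*(v)\|\,k^{d}.
$$
On the other hand, since $\varphi$ is Lipschitz, fixing a base point $x_0\in\C^n\setminus K$ yields $\|\varphi(kv)\|\leq L\,k\|v\|+O(1)$ for some constant $L$; combining this with the elementary polynomial bound $\|g(z)\|\leq A\|z\|^{e}$, valid for $\|z\|$ large, gives the upper estimate $\|g(\varphi(kv))\|\leq A(L\|v\|)^{e}k^{e}+o(k^{e})$. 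Letting $k\to+\infty$ and comparing exponents on the two sides forces $d\leq e$.

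To obtain the reverse inequality I would apply the same reasoning to $\varphi^{-1}$. Substituting $x=\varphi^{-1}(y)$ in the contact inequality turns it into $\frac{1}{C}\|g(y)\|\leq\|f(\varphi^{-1}(y))\|\leq C\|g(y)\|$ for $y\in\C^n\setminus\widetilde K$, so $\varphi^{-1}$ is a bi-Lipschitz contact equivalence from $g$ to $f$. Choosing a direction $w$ with $g^*(w)\neq 0$ and repeating the two estimates with $\varphi^{-1}$ (which is also Lipschitz) in place of $\varphi$ gives $e\leq d$. Hence $d=e$, that is, ${\rm deg}(f)={\rm deg}(g)$.

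The step that requires the most care is the pair of two-sided asymptotic estimates rather than any conceptual obstacle: one must check that the lower-order terms of $f$ (respectively $g$) are genuinely negligible, which is exactly why $v$ is taken off $\{f^*=0\}$, and one must bound $\|\varphi(kv)\|$ solely through the global Lipschitz constant of $\varphi$, using no finer information. Notably, this route needs neither the tangent-cone description nor the relative multiplicities at infinity; it is the infinity analogue of the order comparison of Comte--Milman--Trotman recalled in the introduction, and in particular it avoids the global \L ojasiewicz inequality.
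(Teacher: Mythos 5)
Your proof is correct, but it takes a genuinely different route from the paper's. The paper proves this theorem with tangent-map machinery: it invokes Theorem 4.5 of \cite{FernandesS:2017} to produce a blow-down bi-Lipschitz homeomorphism $d\varphi\colon\C^n\to\C^n$ together with a sequence $n_j\to+\infty$ such that $\varphi(n_jv)/n_j\to d\varphi(v)$, chooses $v$ with $g^*(d\varphi(v))\neq 0$, and, assuming ${\rm deg}(f)<{\rm deg}(g)=k$, reaches the contradiction $\|g^*(d\varphi(v))\|\leq 0$ by dividing the contact inequality by $n_j^{k}$ and letting $j\to+\infty$. You never form this limit map: your three ingredients are the contact inequality as a lower bound for $\|g\circ\varphi\|$ along a ray with $f^*(v)\neq 0$, the Lipschitz bound $\|\varphi(kv)\|\leq Lk\|v\|+O(1)$ anchored at a single base point, and the trivial growth bound $\|g(z)\|\leq A(1+\|z\|)^{{\rm deg}(g)}$ (written this way it holds for every $z$, which removes your small caveat about needing $\|\varphi(kv)\|$ to be large). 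Comparing exponents yields ${\rm deg}(f)\leq{\rm deg}(g)$, the symmetric argument with $\varphi^{-1}$ finishes, and your handling of the constant cases via Liouville is fine. As for what each approach buys: your route is more elementary and self-contained (no ambient extension, no rescaling limit, no tangent cones), and since it uses the Lipschitz property of $\varphi$ only at one base point, it in fact proves the stronger Theorem \ref{weakly_rugose} on weakly rugose equivalence --- it is essentially a ray-wise version of the paper's own proof of that theorem via the growth exponent $\delta_{\infty}$ (Proposition \ref{expoent}). The paper's heavier proof of the present statement, by contrast, is of a piece with the blow-down technique that drives Theorem \ref{main-theorem} and the set-level results of the paper, where no scalar growth comparison is available.
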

\begin{proof}
Let us denote $X=\{x\in \C^n;\, f(x)=0\}$ and $Y=\{x\in \C^n;\, g(x)=0\}$. We have that $X$ and $Y$ are bi-Lipschitz homeomorphic at infinity. By Theorem 4.5 in \cite{FernandesS:2017} and Proposition \ref{algebricity}, $C_{\infty }(X)$ and $C_{\infty }(Y)$ are closed and bi-Lipschitz homeomorphic sets. By hypothesis, there are compact subsets $K,\widetilde K\subset \C^n$, positive constants $C_1$ and $C_2$ and $\varphi:\C^n\setminus K\to \C^n\setminus \widetilde K$  such that 
$$
\frac{1}{C_1}\|x-y\|\leq \|\varphi(x)-\varphi(y)\|\leq C_1\|x-y\|,\, \forall x,y\in \C^n\setminus K
$$
and
$$
\frac{1}{C_2}\|f(x)\|\leq \|g\circ\varphi(x)\|\leq C_2\|f(x)\|, \quad \forall x\in \C^n\setminus K.
$$
Let us suppose that ${\rm deg}(f)<{\rm deg}(g)=k$. Let $S=\{n_j\}_{j\in\N}\subset \N$ be a sequence such that
$$n_j\to +\infty  \quad\mbox{and} \quad \frac{\varphi(n_jv)}{n_j}\to d\varphi(v),$$
as in Theorem 4.5 in \cite{FernandesS:2017}. Moreover, $d\varphi: \C^n\to \C^n$ is a bi-Lipschitz homeomorphism. Then, there is $v\in \C^n$ such that $d\varphi(v)\in \C^n\setminus \{x\in \C^n;\, g^*(x)=0\}$, where $g^*$ is the homogeneous polynomial composed of the monomials in $g$ of maximum degree. Therefore, 
$$
\frac{\|g\circ\varphi(n_jv)\|}{n_j^k}\leq C_2\frac{\|f(n_jv)\|}{n_j^k} , \quad \forall n_j\in S.
$$
By taking $j\to +\infty $, we obtain $\|g^*(d\varphi(v))\|\leq 0$, which is a contradiction. Then, ${\rm deg}(f)\geq {\rm deg}(g)=k$ and by using $\varphi^{-1}$ instead of $\varphi$, we obtain the other inequality. Therefore, ${\rm deg}(g)={\rm deg}(f)$.
\end{proof}

\begin{definition}\label{def_weakly_rugose}
We say that two polynomial mappings $F,G:\C^n\to \C^m$ are {\bf weakly rugose equivalent at infinity}, if there are compact subsets $K,\widetilde K\subset \C^n$, constants $C_1,C_2>0$ and a bijection $\varphi:\C^n\setminus K\to \C^n\setminus \widetilde K$  such that \begin{enumerate}
\item [{\rm (1)}] there exist $y_0\in \C^n\setminus K$ and $w_0\in \C^n\setminus \widetilde K$ such that $\|\varphi(x)-\varphi(y_0)\|\leq C_1\|x-y_0\|$, for all $x\in \C^n\setminus K$ and $\|\varphi^{-1}(z)-\varphi^{-1}(w_0)\|\leq C_1\|z-w_0\|$, for all $z \in \C^n\setminus \widetilde K$;
\item [{\rm (2)}] $\frac{1}{C_2}\|F(x)\|\leq \|G\circ\varphi(x)\|\leq C_2\|F(x)\|, \quad \forall x\in \C^n\setminus K.$
\end{enumerate}
\end{definition}
Let $f:\C^n\to \C$ be a polynomial. Then, for each $r>0$, we define
$$
\delta_{r,\infty }(f)=\textstyle{\inf \{\delta ;\,\frac{|f(z)|}{\|z\|^{\delta}} \mbox{ is bounded on } \C^n\setminus B_r(0)\}.}
$$
Remark that $\delta_{r,\infty }(f)$ does not depend on $r>0$. Thus, we define this common number by $\delta_{\infty }(f)$.
\begin{proposition}\label{expoent}
Let $f:\C^n\to \C$ be a polynomial. Then, ${\rm deg}(f)=\delta_{\infty }(f)$.
\end{proposition}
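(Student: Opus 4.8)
The plan is to bound the growth of $|f|$ at infinity from above and below by the degree, using the decomposition of $f$ into homogeneous parts. Assuming $f\not\equiv 0$, write $f=f_0+f_1+\cdots +f_d$, where each $f_k$ is homogeneous of degree $k$, $d={\rm deg}(f)$, and $f_d=f^*\neq 0$ is the leading form in the notation introduced before Proposition \ref{algebricity}.

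First I would establish the inequality $\delta_{\infty }(f)\leq d$. Each homogeneous component satisfies $|f_k(z)|\leq A_k\|z\|^k$ for a constant $A_k>0$ (for instance $A_k=\max_{\|u\|=1}|f_k(u)|$), so for $\|z\|\geq 1$ the triangle inequality gives $|f(z)|\leq \sum_{k=0}^d A_k\|z\|^k\leq \big(\sum_{k=0}^d A_k\big)\|z\|^d$. Hence $|f(z)|/\|z\|^d$ is bounded on $\C^n\setminus B_1(0)$, and since $\|z\|^{\delta }\geq \|z\|^d$ there whenever $\delta \geq d$, the quotient $|f(z)|/\|z\|^{\delta }$ is bounded for every $\delta \geq d$. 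By the definition of $\delta_{\infty }(f)$ this yields $\delta_{\infty }(f)\leq d$.

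Next I would prove $\delta_{\infty }(f)\geq d$ by exhibiting a direction along which $f$ grows like $\|z\|^d$. Since $f^*$ is a nonzero homogeneous polynomial, I pick $v\in \C^n$ with $\|v\|=1$ and $f^*(v)\neq 0$, and restrict $f$ to the ray $z=tv$, $t>0$. Then $f(tv)=\sum_{k=0}^d t^k f_k(v)=t^d f^*(v)+O(t^{d-1})$, so $|f(tv)|/\|tv\|^{\delta }=|f(tv)|/t^{\delta }\sim |f^*(v)|\,t^{d-\delta }$ as $t\to +\infty $. For any $\delta <d$ this tends to $+\infty $, so $|f(z)|/\|z\|^{\delta }$ is unbounded on $\C^n\setminus B_r(0)$ for every $r>0$; hence no $\delta <d$ belongs to the set defining $\delta_{\infty }(f)$, giving $\delta_{\infty }(f)\geq d$. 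Combining the two inequalities yields $\delta_{\infty }(f)=d={\rm deg}(f)$.

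There is no serious obstacle here; the only point requiring care is that the subleading terms do not destroy the leading growth, which is automatic along a fixed ray $tv$ with $f^*(v)\neq 0$ since they contribute only $O(t^{d-1})$ while the leading term contributes $t^d f^*(v)$.
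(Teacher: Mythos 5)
Your proof is correct and follows essentially the same route as the paper: both obtain $\delta_{\infty}(f)\geq \deg(f)$ by restricting $f$ to a ray $tv$ with $f^{*}(v)\neq 0$, and both obtain $\delta_{\infty}(f)\leq \deg(f)$ from the boundedness of $|f(z)|/\|z\|^{\delta}$ at infinity (you check $\delta=d$ directly via the triangle inequality on the homogeneous components, while the paper checks $\delta>d$ via a vanishing limit --- an immaterial difference). No gaps; the argument is complete.
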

\begin{proof}
If $\delta<d={\rm deg}(f)$ and $f=f_0+f_1+...+f_d$, then we choose $v\not\in V(f_d)$. Thus, $\lim \limits _{t\to +\infty }\frac{|f(tv)|}{t^{\delta}}=+\infty $. Then, $\delta_{\infty }(f)\geq {\rm deg}(f)$.

If $\delta>{\rm deg}(f)$, then $\lim \limits _{\|z\|\to +\infty }\frac{|f(z)|}{\|z\|^{\delta}}=0.$ Thus, there exists $r>0$ such that $\frac{|f(z)|}{\|z\|^{\delta}}\leq 1$, for all $z\not\in  \C^n\setminus B_r(0)$. This implies $\delta_{\infty }(f)\leq {\rm deg}(f)$. Therefore, $\delta_{\infty }(f)= {\rm deg}(f)$.
\end{proof}
The next result is an analogue at infinity of the result of Comte, Milman and Trotman that was mentioned in the introduction.
\begin{theorem}\label{weakly_rugose}
Let $f,g:\C^n\to \C$ be two polynomials. If $f$ and $g$ are weakly rugose equivalent at infinity, then ${\rm deg}(f)={\rm deg}(g)$.
\end{theorem}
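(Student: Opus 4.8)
The plan is to reduce everything to the metric characterization of the degree provided by Proposition \ref{expoent}, namely ${\rm deg}(f)=\delta_{\infty}(f)$ and ${\rm deg}(g)=\delta_{\infty}(g)$, and then to establish the single equality $\delta_{\infty}(f)=\delta_{\infty}(g)$ by exploiting the two conditions in Definition \ref{def_weakly_rugose}. The point is that $\delta_{\infty}$ is a purely metric quantity measuring the polynomial growth rate of $|f|$ at infinity, and weak rugose equivalence is precisely designed to compare such rates: condition (2) compares the sizes of $f$ and $g\circ\varphi$, while condition (1) will let me compare $\|\varphi(x)\|$ with $\|x\|$.

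First I would extract from condition (1) a two-sided \emph{linear} control of $\|\varphi(x)\|$ by $\|x\|$ for large $\|x\|$. From $\|\varphi(x)-\varphi(y_0)\|\leq C_1\|x-y_0\|$ one gets $\|\varphi(x)\|\leq C_1\|x\|+A$ with $A=\|\varphi(y_0)\|+C_1\|y_0\|$, and applying the analogous inequality for $\varphi^{-1}$ to $z=\varphi(x)$ yields $\|x\|\leq C_1\|\varphi(x)\|+B$ for a constant $B=C_1\|w_0\|+\|\varphi^{-1}(w_0)\|$, i.e. $\|\varphi(x)\|\geq \frac{1}{C_1}\|x\|-\frac{B}{C_1}$. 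Since the additive constants become negligible as $\|x\|\to\infty$, there are $R>0$ and $\widetilde C>1$ with
$$
\frac{1}{\widetilde C}\|x\|\leq \|\varphi(x)\|\leq \widetilde C\|x\|, \quad \text{for all } x\in\C^n\setminus K \text{ with } \|x\|\geq R.
$$
In particular $\|\varphi(x)\|\to\infty$ as $\|x\|\to\infty$, which is what lets me freely apply growth estimates to $g$ evaluated at $\varphi(x)$ without needing continuity or properness of $\varphi$ as a separate hypothesis.

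With this in hand I would prove $\delta_{\infty}(f)\leq \delta_{\infty}(g)$. Fix any $\delta>\delta_{\infty}(g)={\rm deg}(g)$; then $|g(w)|\leq B_g\|w\|^{\delta}$ for $\|w\|$ large. For $\|x\|$ large enough that $\|\varphi(x)\|$ is large, condition (2) gives $|f(x)|\leq C_2|g(\varphi(x))|\leq C_2 B_g\|\varphi(x)\|^{\delta}\leq C_2 B_g\,\widetilde C^{\,\delta}\|x\|^{\delta}$, so $|f(x)|/\|x\|^{\delta}$ is bounded at infinity and hence $\delta\geq\delta_{\infty}(f)$; letting $\delta\downarrow{\rm deg}(g)$ gives $\delta_{\infty}(f)\leq\delta_{\infty}(g)$. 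The reverse inequality follows by symmetry: substituting $x=\varphi^{-1}(z)$ in condition (2) turns it into $\frac{1}{C_2}\|g(z)\|\leq\|f\circ\varphi^{-1}(z)\|\leq C_2\|g(z)\|$, and condition (1) is already symmetric in $\varphi$ and $\varphi^{-1}$, so the same argument applied to $\varphi^{-1}$ yields $\delta_{\infty}(g)\leq\delta_{\infty}(f)$. Therefore $\delta_{\infty}(f)=\delta_{\infty}(g)$, and Proposition \ref{expoent} gives ${\rm deg}(f)={\rm deg}(g)$. The only genuinely delicate step is the first one — promoting the one-point growth bounds of condition (1) to the two-sided comparison of $\|\varphi(x)\|$ and $\|x\|$ — but once the additive constants are absorbed at infinity, the rest is a direct growth comparison.
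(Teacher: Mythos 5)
Your proof is correct and follows essentially the same route as the paper's: both reduce the statement to the equality $\delta_{\infty}(f)=\delta_{\infty}(g)$ via Proposition \ref{expoent}, both use condition (1) of Definition \ref{def_weakly_rugose} to get a lower bound on $\|\varphi(x)\|$ (guaranteeing $\varphi(x)$ stays in the region where the growth bound on $g$ applies) together with a linear upper bound $\|\varphi(x)\|\leq C_1\|x\|+A$, and both then transfer the growth estimate through condition (2) and conclude by symmetry. The only cosmetic difference is that you package the estimates as a two-sided comparison $\frac{1}{\widetilde C}\|x\|\leq\|\varphi(x)\|\leq\widetilde C\|x\|$ before comparing growth rates, whereas the paper keeps the constant lower bound $\|\varphi(x)\|>\widetilde r$ and absorbs the additive terms directly inside the chain of inequalities for $|f(x)|/\|x\|^{\delta}$.
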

\begin{proof}
By hypothesis, there are compact subsets $K,\widetilde K\subset \C^n$, constants $C_1,C_2>0$ and a bijection $\varphi:\C^n\setminus K\to \C^n\setminus \widetilde K$  such that 
\begin{enumerate}
\item [{\rm (1)}] there exist $y_0\in \C^n\setminus K$ and $w_0\in \C^n\setminus \widetilde K$ such that $\|\varphi(x)-\varphi(y_0)\|\leq C_1\|x-y_0\|$, for all $x\in \C^n\setminus K$ and $\|\varphi^{-1}(z)-\varphi^{-1}(w_0)\|\leq C_1\|z-w_0\|$, for all $z \in \C^n\setminus \widetilde K$;
\item [{\rm (2)}] $\frac{1}{C_2}\|f(x)\|\leq \|g\circ\varphi(x)\|\leq C_2\|f(x)\|, \quad \forall x\in \C^n\setminus K.$
\end{enumerate}

Let $r>0$ be a positive number satisfying 
 $\widetilde{r}=C_1^{-1}(r-\|\varphi^{-1}(w_0)\|)-\|w_0\|>0$ and $K \subset B_{r}(0)$. Thus, $\|x\|>r$ implies $\|\varphi(x)\|> \widetilde{r}$. In fact, if $\|x\|>r$, then
$$
 \begin{array}{lllll}
\|x-\varphi^{-1}(w_0)\| & \geq &  \|x\|-\|\varphi^{-1}(w_0)\| & > & r-\|\varphi^{-1}(w_0)\|
\end{array}
$$
and, by hypothesis, we have 
$$\|x-\varphi^{-1}(w_0)\|=\|\varphi^{-1}(\varphi(x))-\varphi^{-1}(w_0)\|\leq C_1\|\varphi(x)-w_0\|,$$ 
then
$$
 \begin{array}{lllll}		
C_1(\|\varphi(x)\|+\|w_0\| )    & \geq & C_1(\|\varphi(x)-w_0\|) & > & r-\|\varphi^{-1}(w_0)\|,
\end{array}
$$		                   
and, therefore, $\|\varphi(x)\| \geq  C_1^{-1}(r-\|\varphi^{-1}(w_0)\|)-\|w_0\|=\widetilde{r}$.

Moreover, we have the following
\begin{eqnarray*}
\frac{|f(x)|}{\|x\|^{\delta}}& = & \frac{|f(x)|}{\|\varphi(x)\|^{\delta}}\frac{\|\varphi(x)\|^{\delta}}{\|x\|^{\delta}}\\
							 & \leq & C_2\frac{|g(\varphi(x))|}{\|\varphi(x)\|^{\delta}}\left(\frac{\|\varphi(x)-\varphi(y_0)\|+\|\varphi(y_0)\|}{\|x\|}\right)^{\delta}\\
							 & \leq & C_2\frac{|g(\varphi(x))|}{\|\varphi(x)\|^{\delta}}\left(C_1\frac{\|x-y_0\|}{\|x\|}+\frac{\|\varphi(y_0)\|}{\|x\|}\right)^{\delta}\\
							 & \leq & C_2\frac{|g(\varphi(x))|}{\|\varphi(x)\|^{\delta}}\left(C_1+C_1\frac{\|y_0\|}{r}+\frac{\|\varphi(y_0)\|}{r}\right)^{\delta}\\
							 & = & C\frac{|g(\varphi(x))|}{\|\varphi(x)\|^{\delta}},
\end{eqnarray*}
for all $x\in \C^n\setminus B_{r}(0)$. Thus, if $\frac{|g(z)|}{\|z\|^{\delta}}$ is bounded on $\C^n\setminus B_{\widetilde{r}}(0)$, then $\frac{|f(x)|}{\|x\|^{\delta}}$ is bounded on $\C^n\setminus B_{r}(0)$. This implies 
$$
\textstyle{\{\rho ;\,\frac{|g(z)|}{\|z\|^{\rho}} \mbox{ is bounded on } \C^n\setminus B_{\widetilde{r}}(0)\}\subset \{s ;\,\frac{|f(x)|}{\|x\|^{s}} \mbox{ is bounded on } \C^n\setminus B_{r}(0)\},}
$$
Then, we obtain $\delta_{r,\infty }(f)\leq \delta_{\widetilde{r},\infty }(g)$ and, since $\delta_{r,\infty }(f)=\delta_{\infty }(f)$ and $\delta_{\widetilde{r},\infty }(g)=\delta_{\infty }(g)$, we have $\delta_{\infty }(f)\leq \delta_{\infty }(g)$. Therefore, by Proposition \ref{expoent}, ${\rm deg}(f)\leq{\rm deg}(g)$. Similarly, we obtain ${\rm deg}(g)\leq{\rm deg}(f)$. Thus, we have the equality ${\rm deg}(g)={\rm deg}(f)$.
\end{proof}

\begin{definition}
We say that two polynomials $f,g:\C^n\to \C$ are {\bf bi-Lipschitz right-left equivalent at infinity}, if there are compact subsets $K,\widetilde K\subset \C^n$, a constant $C>0$ and bi-Lipschitz homeomorphisms $\varphi:\C^n\setminus K\to \C^n\setminus \widetilde K$ and $\phi:\C\to \C$ such that $f(x)=\phi\circ g\circ\varphi(x),$ $\forall x\in \C^n\setminus K.$
\end{definition}
The following result follows directly from the definitions.
\begin{proposition}\label{implica}
Let $f,g:\C^n\to \C$ be two polynomials. Let us consider the following statements:
\begin{enumerate}
\item [{\rm (1)}] $f$ and $g$ are bi-Lipschitz right-left equivalent at infinity;
\item [{\rm (2)}] $f$ and $g$ are bi-Lipschitz contact equivalent at infinity;
\item [{\rm (3)}] $f$ and $g$ are rugose equivalent at infinity;
\item [{\rm (4)}] $f$ and $g$ are weakly rugose equivalent at infinity.
\end{enumerate}
Then, $(1)\Rightarrow (2)\Rightarrow (3)\Rightarrow (4)$.
\end{proposition}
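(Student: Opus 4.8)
The plan is to treat the three implications separately, since each reduces to unwinding the relevant definitions and choosing the constants and base points appropriately; two of them are immediate inclusions of the defining conditions, while the first requires one extra observation about the one-variable homeomorphism $\phi$.

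For $(1)\Rightarrow(2)$ I would start from the identity $f(x)=\phi\circ g\circ\varphi(x)$ on $\C^n\setminus K$, where $\phi\colon\C\to\C$ is bi-Lipschitz, say $\frac{1}{C}\|s-t\|\leq\|\phi(s)-\phi(t)\|\leq C\|s-t\|$. The key point is that, since $\phi$ fixes $0$, taking $t=0$ gives $\frac{1}{C}\|s\|\leq\|\phi(s)\|\leq C\|s\|$ for every $s\in\C$. Substituting $s=g\circ\varphi(x)$ and $\phi(s)=f(x)$ yields $\frac{1}{C}\|f(x)\|\leq\|g\circ\varphi(x)\|\leq C\|f(x)\|$, which is exactly the contact inequality with the same $\varphi$ and constant $C$. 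I expect the delicate point to be justifying $\phi(0)=0$: without it the implication is false (for instance $\phi(t)=t+1$ and $\varphi={\rm id}$ would make $g+1$ ``right-left equivalent'' to $g$, yet the contact estimate breaks down along $g^{-1}(0)$), so the argument must use that the zero fibers correspond at infinity, equivalently that $\phi$ sends $0$ to $0$.

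For $(2)\Rightarrow(3)$ the bi-Lipschitz homeomorphism $\varphi$ furnished by contact equivalence is in particular a bijection with $\frac{1}{C_1}\|x-y\|\leq\|\varphi(x)-\varphi(y)\|\leq C_1\|x-y\|$ for all $x,y$, where $C_1$ is its bi-Lipschitz constant; this gives condition (1) of rugose equivalence (which only asks for these bounds on the smaller family of pairs with $y\in f^{-1}(0)\setminus K$), while condition (2) of rugose equivalence is verbatim the contact inequality with $C_2=C$. For $(3)\Rightarrow(4)$ I would pick any $y_0\in f^{-1}(0)\setminus K$ (nonempty, as the zero set of a nonconstant polynomial is an unbounded hypersurface) and set $w_0=\varphi(y_0)$. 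Then the upper bound in rugose (1), specialized to $y=y_0$, is the first inequality of weakly rugose (1), while the lower bound $\frac{1}{C_1}\|x-y_0\|\leq\|\varphi(x)-\varphi(y_0)\|$, rewritten via $z=\varphi(x)$, is exactly the second inequality $\|\varphi^{-1}(z)-\varphi^{-1}(w_0)\|\leq C_1\|z-w_0\|$; condition (2) is identical in both definitions.

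The only genuine obstacles are the two normalization/existence points just flagged, namely that $\phi(0)=0$ in $(1)\Rightarrow(2)$ and that $f^{-1}(0)\setminus K\neq\emptyset$ in $(3)\Rightarrow(4)$. Granting these, every step is a direct comparison of the defining inequalities with an explicit choice of constants (the bi-Lipschitz constant of $\varphi$ for $C_1$, and $C$ for $C_2$) and of the base point $y_0$, so no further estimates are needed and the proposition indeed follows directly from the definitions.
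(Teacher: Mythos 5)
The paper itself offers no argument for this proposition (it simply declares that it ``follows directly from the definitions''), so your unwinding of the definitions is exactly the intended proof. Your steps $(2)\Rightarrow(3)$ and $(3)\Rightarrow(4)$ are correct: the global bi-Lipschitz inequality for $\varphi$ restricted to pairs $(x,y)$ with $y\in f^{-1}(0)\setminus K$ gives rugose condition (1), and choosing $y_0\in f^{-1}(0)\setminus K$, $w_0=\varphi(y_0)$, and rewriting the lower rugose bound in the variable $z=\varphi(x)$ gives weakly rugose condition (1), while condition (2) is identical in all three definitions. One caveat: your nonemptiness claim that ``the zero set of a nonconstant polynomial is an unbounded hypersurface'' needs $n\geq 2$; for $n=1$ the zero set is finite, rugose condition (1) becomes vacuous once $K\supset f^{-1}(0)$, and $(3)\Rightarrow(4)$ actually fails (e.g.\ $f=x$ and $g=x^2$ are rugose equivalent at infinity via an annulus-matching bijection, but cannot be weakly rugose equivalent, as the one-point Lipschitz bound on $\varphi^{-1}$ would force $|\varphi^{-1}(z)|=O(|z|)$ while the contact condition forces $|\varphi^{-1}(z)|\sim |z|^2$).

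The genuine gap is in $(1)\Rightarrow(2)$, and it sits exactly where you flagged it, but it cannot be repaired in the way you hope. The paper's definition of bi-Lipschitz right-left equivalence at infinity does not require $\phi(0)=0$, and this normalization is not derivable from the definition: take $g(x,y)=x^2y-x$ and $f=g-1$. These are right-left equivalent at infinity with $\varphi=\mathrm{id}$ and the isometry $\phi(t)=t-1$, yet they are not bi-Lipschitz contact equivalent at infinity. Indeed, the contact inequalities force any admissible homeomorphism $\psi$ to restrict to a bi-Lipschitz homeomorphism of $V(f)\setminus K$ onto $V(g)\setminus \widetilde K$; but $V(f)=g^{-1}(1)\cong \C^{*}$ has two branches at infinity, while $V(g)=\{x=0\}\cup\{xy=1\}$ has three, and the stable number of unbounded connected components outside large balls is preserved by bi-Lipschitz homeomorphisms at infinity (Lipschitz maps in both directions carry sequences tending to infinity to sequences tending to infinity). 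So, with the definitions as literally written, $(1)\Rightarrow(2)$ is false; it becomes true---by exactly your one-line estimate $\frac{1}{C}\|s\|\leq\|\phi(s)\|\leq C\|s\|$ applied to $s=g(\varphi(x))$---once the definition of right-left equivalence is read with the (standard in the local theory, but unstated here) normalization $\phi(0)=0$. You identified the right obstruction, and this is as much a defect of the paper's definition as of your argument, but your proof of $(1)\Rightarrow(2)$ remains conditional on a hypothesis that the stated definition does not provide and that no argument can supply.
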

We finish this paper by stating some direct consequences of Theorem \ref{weakly_rugose} and Proposition \ref{implica}.
\begin{corollary}[\cite{FernandesS:2017b}, Theorem 3.7]
Let $f,g:\C^n\to \C$ be two polynomials. If $f$ and $g$ are rugose equivalent at infinity, then ${\rm deg}(f)={\rm deg}(g)$.
\end{corollary}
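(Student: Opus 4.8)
The plan is to obtain this statement with no fresh metric work, simply by chaining the two results that immediately precede it: the definitional hierarchy of Proposition \ref{implica} and the degree invariance of Theorem \ref{weakly_rugose}. Rugose equivalence at infinity is exactly condition (3) in Proposition \ref{implica}, and the hypothesis under which Theorem \ref{weakly_rugose} forces equality of degrees is condition (4), weak rugose equivalence. So the entire argument reduces to the single implication $(3)\Rightarrow(4)$, after which Theorem \ref{weakly_rugose} delivers ${\rm deg}(f)={\rm deg}(g)$ at once.

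To make the reduction self-contained I would unpack that implication, which Proposition \ref{implica} asserts holds directly by inspection of the definitions. Starting from a rugose equivalence $\varphi\colon\C^n\setminus K\to\C^n\setminus\widetilde K$ with constants $C_1,C_2$, I would fix a base point $y_0\in f^{-1}(0)\setminus K$ and set $w_0=\varphi(y_0)\in\C^n\setminus\widetilde K$. Specializing the upper half of the two-sided estimate in rugose condition (1) to $y=y_0$ gives $\|\varphi(x)-\varphi(y_0)\|\le C_1\|x-y_0\|$ for every $x$, which is the first half of weakly rugose condition (1). For the second half, given $z\in\C^n\setminus\widetilde K$ I would write $x=\varphi^{-1}(z)$ and apply the lower half of rugose (1) with this $x$ and $y=y_0$, obtaining $\tfrac{1}{C_1}\|x-y_0\|\le\|\varphi(x)-\varphi(y_0)\|=\|z-w_0\|$, that is, $\|\varphi^{-1}(z)-\varphi^{-1}(w_0)\|\le C_1\|z-w_0\|$. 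Condition (2) is verbatim the same in both notions, so $\varphi$ witnesses weak rugose equivalence with the same constants.

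There is essentially no obstacle here, since the real content lives in the already-proven Theorem \ref{weakly_rugose} (whose proof passes through Proposition \ref{expoent} to compare the growth exponents $\delta_\infty(f)$ and $\delta_\infty(g)$). The only point meriting a moment's care is the existence of the base point $y_0\in f^{-1}(0)\setminus K$ at which the one-sided bounds are anchored: the nonconstant case has $V(f)$ unbounded, so such a $y_0$ lies outside any compact $K$ and the specialization $y=y_0$ is legitimate, while the constant case is trivial. Everything else is a direct reading of the definitions, so the corollary is a genuinely formal consequence of the preceding theorem.
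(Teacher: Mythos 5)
Your proposal is correct and is exactly the paper's own route: the paper obtains this corollary as a direct consequence of Proposition~\ref{implica} (the implication $(3)\Rightarrow(4)$) together with Theorem~\ref{weakly_rugose}, and your unpacking of $(3)\Rightarrow(4)$ --- anchoring at a zero $y_0\in f^{-1}(0)\setminus K$, setting $w_0=\varphi(y_0)$, and reading the two one-sided bounds off the two halves of rugose condition (1) --- is precisely the content the paper compresses into ``follows directly from the definitions.''

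One caveat on the point you yourself flagged: your justification that ``the nonconstant case has $V(f)$ unbounded'' is only valid for $n\ge 2$, where $V(f)$ has positive dimension. For $n=1$ the zero set of a nonconstant polynomial is finite, so one may take $K\supset f^{-1}(0)$ and rugose condition (1) becomes vacuous; then the anchoring step has no $y_0$ to use, and in fact with the paper's literal definitions the radial bijection $\varphi(re^{i\theta})=r^{1/2}e^{i\theta}$ on $\{|z|>1\}$ exhibits $f(z)=z$ and $g(w)=w^2$ as ``rugose equivalent at infinity'' (condition (2) holds with $C_2=1$), even though their degrees differ. So the corollary, and the implication $(3)\Rightarrow(4)$, genuinely require $f^{-1}(0)$ to be unbounded (equivalently $n\ge2$ and $f$ nonconstant, up to the trivial constant cases). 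This is not a defect of your argument relative to the paper --- Proposition~\ref{implica} carries the same implicit assumption --- but since you singled this out as the one point needing care, the correct statement of that point is ``$V(f)$ is unbounded because it is a complex algebraic set of positive dimension,'' which needs $n\ge 2$.
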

\begin{corollary}
Let $f,g:\C^n\to \C$ be two polynomials. If $f$ and $g$ are bi-Lipschitz contact equivalent at infinity, then ${\rm deg}(f)={\rm deg}(g)$.
\end{corollary}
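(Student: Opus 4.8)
The plan is to deduce this corollary formally from the two results already established, namely Proposition \ref{implica} and Theorem \ref{weakly_rugose}, so that no new geometric input is required. The entire analytic content has already been spent on Theorem \ref{weakly_rugose}, and what remains is only to recognize the hypothesis of the corollary as a special case of weak rugose equivalence at infinity.

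First I would observe that bi-Lipschitz contact equivalence at infinity is precisely statement (2) in Proposition \ref{implica}. That proposition records the chain $(1)\Rightarrow(2)\Rightarrow(3)\Rightarrow(4)$, so in particular $(2)\Rightarrow(4)$: if $f$ and $g$ are bi-Lipschitz contact equivalent at infinity, then they are weakly rugose equivalent at infinity. The only point that $(2)\Rightarrow(4)$ really encodes is that condition (1) of Definition \ref{def_weakly_rugose} holds automatically for a bi-Lipschitz homeomorphism $\varphi:\C^n\setminus K\to\C^n\setminus\widetilde K$: choosing any base points $y_0\in\C^n\setminus K$ and $w_0\in\C^n\setminus\widetilde K$, the Lipschitz bounds on $\varphi$ and on $\varphi^{-1}$ give $\|\varphi(x)-\varphi(y_0)\|\leq C_1\|x-y_0\|$ for all $x$ and $\|\varphi^{-1}(z)-\varphi^{-1}(w_0)\|\leq C_1\|z-w_0\|$ for all $z$, with $C_1=\max\{Lip(\varphi),Lip(\varphi^{-1})\}$. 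Condition (2) of Definition \ref{def_weakly_rugose} is literally the contact inequality appearing in the definition of bi-Lipschitz contact equivalence, so it transfers verbatim.

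With the pair $(f,g)$ now known to be weakly rugose equivalent at infinity, I would simply invoke Theorem \ref{weakly_rugose} and conclude ${\rm deg}(f)={\rm deg}(g)$. There is no genuine obstacle at this stage: the estimate comparing $\frac{|f(x)|}{\|x\|^{\delta}}$ with $\frac{|g(\varphi(x))|}{\|\varphi(x)\|^{\delta}}$, together with the identity ${\rm deg}=\delta_{\infty}$ from Proposition \ref{expoent}, is exactly the machinery carried out inside the proof of Theorem \ref{weakly_rugose}. The corollary is therefore a direct specialization of that theorem through Proposition \ref{implica}, and the proof consists only of citing these two results in succession.
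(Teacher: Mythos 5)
Your proof is correct and is exactly the paper's own route: the paper presents this corollary as a direct consequence of Proposition \ref{implica} (the implication $(2)\Rightarrow(4)$, which holds for just the reason you give) combined with Theorem \ref{weakly_rugose}. Incidentally, the paper also proves the same statement independently in an earlier theorem via tangent cones at infinity and the top-degree form $g^*$, but the corollary as stated is deduced precisely as you do.
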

\begin{corollary}
Let $f,g:\C^n\to \C$ be two polynomials. If $f$ and $g$ are bi-Lipschitz right-left equivalent at infinity, then ${\rm deg}(f)={\rm deg}(g)$.
\end{corollary}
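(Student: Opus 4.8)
The plan is to obtain this corollary with essentially no new analysis, by chaining together two results already established in the paper: Proposition \ref{implica} and Theorem \ref{weakly_rugose}. By hypothesis $f$ and $g$ satisfy condition (1) of Proposition \ref{implica}, so the implication chain $(1)\Rightarrow (2)\Rightarrow (3)\Rightarrow (4)$ proved there shows that $f$ and $g$ are weakly rugose equivalent at infinity. Theorem \ref{weakly_rugose} then yields ${\rm deg}(f)={\rm deg}(g)$ immediately. In other words, the right-left hypothesis is the strongest of the four equivalences, so the corollary is a formal specialization of the weakly rugose case.

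The one step that actually exploits the right-left structure is the first implication $(1)\Rightarrow (2)$, and that is the piece I would want to see clearly. Writing $f=\phi\circ g\circ\varphi$ on $\C^n\setminus K$ with $\phi\colon\C\to\C$ bi-Lipschitz, the bi-Lipschitz bound for $\phi$ taken against the basepoint where $\phi$ vanishes gives $\frac{1}{L}|w|\leq |\phi(w)|\leq L|w|$ for all $w\in\C$ (after the natural normalization $\phi(0)=0$, under which the zero sets at infinity of $f$ and of $g\circ\varphi$ correspond). Substituting $w=g(\varphi(x))$ produces exactly the contact inequality $\frac{1}{L}\|f(x)\|\leq \|g\circ\varphi(x)\|\leq L\|f(x)\|$, which is condition (2); the further implications $(2)\Rightarrow (3)\Rightarrow (4)$ are then only weakenings of the metric requirement on $\varphi$ and need nothing beyond unwinding the definitions.

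I do not anticipate a genuine obstacle, since all of the substantive work is carried by Theorem \ref{weakly_rugose}, whose proof detects the degree through the growth exponent $\delta_{\infty}$ of Proposition \ref{expoent}. The only delicate bookkeeping is the additive constant $\phi(0)$, which is harmless because $\delta_{\infty}$ depends only on the top-order growth of $|f|$ along directions realizing the maximal degree. If one preferred to sidestep the implication chain altogether, I would argue directly via Proposition \ref{expoent}: a bi-Lipschitz self-map $\phi$ of $\C$ forces $|\phi(w)|\asymp |w|$ for large $|w|$, while a bi-Lipschitz $\varphi$ at infinity forces $\|\varphi(x)\|\asymp\|x\|$, so that $\frac{|f(x)|}{\|x\|^{\delta}}$ and $\frac{|g(\varphi(x))|}{\|\varphi(x)\|^{\delta}}$ share the same boundedness behaviour at infinity; this gives $\delta_{\infty}(f)=\delta_{\infty}(g)$ and hence the claimed equality of degrees.
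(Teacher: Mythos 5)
Your proof is correct and takes the same route as the paper, which likewise presents this corollary as an immediate consequence of Proposition \ref{implica} and Theorem \ref{weakly_rugose}. Your extra care with the implication $(1)\Rightarrow(2)$ --- normalizing $\phi(0)=0$, i.e.\ replacing $f$ by $f-\phi(0)$, which leaves the degree unchanged --- actually fills in a detail the paper leaves implicit when it asserts that Proposition \ref{implica} ``follows directly from the definitions.''
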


\end{document}